\newtheorem{theorem}{Theorem}
\newtheorem{lemma}{Lemma}
\DeclareMathOperator*{\argmin}{arg\,min}
\newcommand{\squeezeup}{\vspace{-2.5mm}}
\begin{document}
\title{Learning Topology of the Power Distribution Grid with and without Missing Data}
\author{\IEEEauthorblockN{Deepjyoti~Deka*, Scott~Backhaus\dag, and Michael~Chertkov\dag\\}
\IEEEauthorblockA{*Electrical \& Computer Engineering, University of Texas at Austin,
\dag Los Alamos National Laboratory, USA\\
Email: deepjyotideka@utexas.edu, backhaus@lanl.gov, chertkov@lanl.gov}}

\maketitle

\begin{abstract}
Distribution grids refer to the part of the power grid that delivers electricity from substations to the loads. Structurally a distribution grid is operated in one of several radial/tree-like topologies that are derived from an original loopy grid graph by opening switches on some lines. Due to limited presence of real-time switch monitoring devices, the operating structure needs to be estimated indirectly. This paper presents a new learning algorithm that uses only nodal voltage measurements to determine the operational radial structure. The algorithm is based on the key result stating that the correct operating structure is the optimal solution of the minimum-weight spanning tree problem over the original loopy graph where weights on all permissible edges/lines (open or closed) is the variance of nodal voltage difference at the edge ends. Compared to existing work, this spanning tree based approach has significantly lower complexity as it does not require information on line parameters. Further, a modified learning algorithm is developed for cases when the input voltage measurements are limited to only a subset of the total grid nodes. Performance of the algorithms (with and without missing data) is demonstrated by experiments on test cases.
\end{abstract}

\begin{IEEEkeywords}
Power Distribution Networks, Power Flows, Spanning Tree, Graphical Models, Load estimation, Voltage measurements, Missing data, Computational Complexity
\end{IEEEkeywords}
\section{Introduction}
\label{sec:intro}
Distribution grids constitute the low voltage segment of the power system delivering electricity from substations to end-users. Both structurally and operationally the distribution grids are distinct from the transmission (high voltage) portion of the power system. A typical distribution grid is operated  as a collection of disjoint tree graphs, each growing from substations at the root to customers. However, the complete layout of the distribution system is loopy to allow multiple alternatives for the trees to energize operationally. Switching from one layout to another, implemented through switch on/off devices placed on many segments of the distribution grid \cite{distgridpart1}, can take place rather often, in some cases few times an hour. (See Fig.~\ref{fig:city} for the illustration.)
More frequent reconfiguration of the distribution is also promoted by recent in-mass integration of smart meters, PMUs \cite{phadke1993synchronized} and smart devices, such as deferrable loads and energy storage devices. Mixed operational responsibilities in monitoring and operations, as well as the growing role of the new smart devices and controls, make fast and reliable estimation of the operational configuration of the distribution grid an important practical task, complicated by the lack of real-time, line-based measurements. In such a scenario, to estimate the distribution grid operational topology one ought to rely only on nodal measurements of voltage and end-user consumption.  Notice, that brute force (combinatorial) check of topologies for the nodal measurement consistency is prohibitively expensive with the complexity growing exponentially with the number of loops in the grid layout.

In this work we focus on beating the naive exponential complexity of the operational topology learning task by exploring power flow specific correlations between available nodal measurements. In particular, \emph{we develop a spanning tree algorithm that reconstructs the radial operational topology from the original loopy layout by using functions of nodal voltage magnitudes as edge weights.} Computational complexity of this algorithm is order $O(n\log n)$ in the size of the loopy graph's edge set. Moreover, the algorithm is generalized to the case when some nodes are hidden.

\subsection{Prior Work}

Several approaches in the past have been made to learn the topology of power grids under different operating conditions and available measurements. \cite{he2011dependency} uses a Markov random field model for bus phase angles to build a dependency graph to identify faults in the grids. \cite{bolognani2013identification} presents a topology identification algorithm for distribution grids that uses the signs of elements in the inverse covariance matrix of voltage measurements. \cite{berkeley} compares available time-series observations from smart meters with a database of permissible signatures to identify topology changes. This is similar to envelope comparison schemes used in parameter estimation \cite{sandia1, sandia2}. For available line flow measurements, topology estimation using maximum likelihood tests was analyzed in \cite{ramstanford}. In our own prior work \cite{distgridpart1,distgridpart2}, we analyzed an iterative greedy structure learning algorithm using trends in second order moments of voltages. \cite{distgridpart2} also presented the first attempt at topology learning from incomplete voltage data where nodes with missing voltages are separated by greater than two hops. The aforementioned approaches are specific to power grid graphs and typically not linked to research in probabilistic Graphical Models (GM) \cite{wainwright2008graphical} used to study statistics of images, languages, social networks, and communication schemes. Learning generic (loopy) structures from pair-wise correlations in a GM is a difficult task, normally based on the maximal likelihood  \cite{wainwright2008graphical} with regularization for sparsity \cite{ravikumar2010high} and  greedy schemes utilizing conditional mutual information \cite{anandkumar2011high, netrapalli2010greedy}. However, the GM-based learning simplifies dramatically when used, following the famous Chow-Liu approach \cite{chow1968approximating}, to reconstruct the spanning tree maximizing edge-factorized mutual information. \cite{choi2011learning} generalizes this technique to learn tree structured GMs with latent variables (missing data) using information distances as edge weights. 

\subsection{Contribution of This Work}
Following \cite{distgridpart1,distgridpart2}, we consider linear lossless AC power flow models (also called, following \cite{89BWa,89BWb} Lin-Dist-Flow) and assume that fluctuations of consumption at the nodes are uncorrelated. In this setting, our main result states that reconstruction of the operating grid topology is equivalent to solving the minimum weight spanning tree problem defined over the loopy graph of the grid layout where edge weights are given by \emph{variances in voltage magnitude differences across the edges}. We use this result to formulate the operating topology as a spanning tree reconstruction problem that needs only empirical voltage magnitude measurements as input. As spanning trees can be efficiently reconstructed, our learning algorithm has much lower average and worst-case computational complexity compared to existing techniques \cite{bolognani2013identification,distgridpart2}. While our algorithm does not require knowing line impedances, these can be used to estimate additionally statics of power consumption. Further, we extend the topology learning algorithm to the case with missing voltage data. The extension works provided nodes with missing data are separated by at least two hops from each other and  covariances of nodal power consumption are available. Compared to our prior work \cite{distgridpart2} on learning with missing data, the spanning tree approach has lower complexity. It also allows extension to cases with lesser restrictions on missing data. Our algorithm shows some commonality with the GM based spanning-tree learning of \cite{choi2011learning}. However the key difference is that our approach relies principally on the Kirchoff's laws of physical network flows contrary to the measure of conditional independence utilized in \cite{chow1968approximating,choi2011learning}. Thus, voltage magnitude based edge weights used in our work are not restricted to satisfy graph additivity unlike information distances in GM. Further, in the case with missing data, we use power flow relations between nodal voltages and injections that, to the best of our knowledge, do not have an analog in GM learning literature. We highlight the performance of our algorithm through experiments on test distribution grids for both cases, with or without missing data.

The rest of the manuscript is organized as follows. Section \ref{sec:structure} introduces notations, nomenclature and power flow relations in the distribution grids. Section \ref{sec:trends} describes important features of the  nodal voltage magnitudes. This Section also contains the proof of our main -- spanning tree learning/reconstruction -- theorem. Algorithm reconstructing operational spanning tree in the case of complete visibility (voltage magnitudes are observed at all nodes) is discussed in Section \ref{sec:algo1}. Modification of the algorithm which allows for some missing data (at the nodes separated by at least two hopes) is described in Section \ref{sec:missing}. This Section also contains a brief discussion of some other extensions/applications of our approach. Simulation results of our learning algorithm on a test radial network are presented in Section \ref{sec:experiments}. Finally, Section \ref{sec:conclusions} contains conclusions and discussion of future work.

\section{Distribution Grid: Structure and Power Flows}
\label{sec:structure}

\textbf{Radial Structure}: The original distribution grid is denoted by the graph ${\cal G}=({\cal V},{\cal E})$, where ${\cal V}$ is the set of buses/nodes of the graph and ${\cal E}$ is the set of all undirected lines/edges (open or operational). We denote nodes by alphabets ($a$, $b$,...) and the edge connecting nodes $a$ and $b$ by $(ab)$. The operational grid has a `radial' structure as shown in Fig.~\ref{fig:city}. In general, the operational grid is a collection of $K$ disjoint trees, $\cup_{i=1,\cdots,K}{\cal T}_i$ where each tree's root node has degree one (connected by one edge) and represents a substation.
\begin{figure}[!bt]
\centering
\includegraphics[width=0.38\textwidth, height =.29\textwidth]{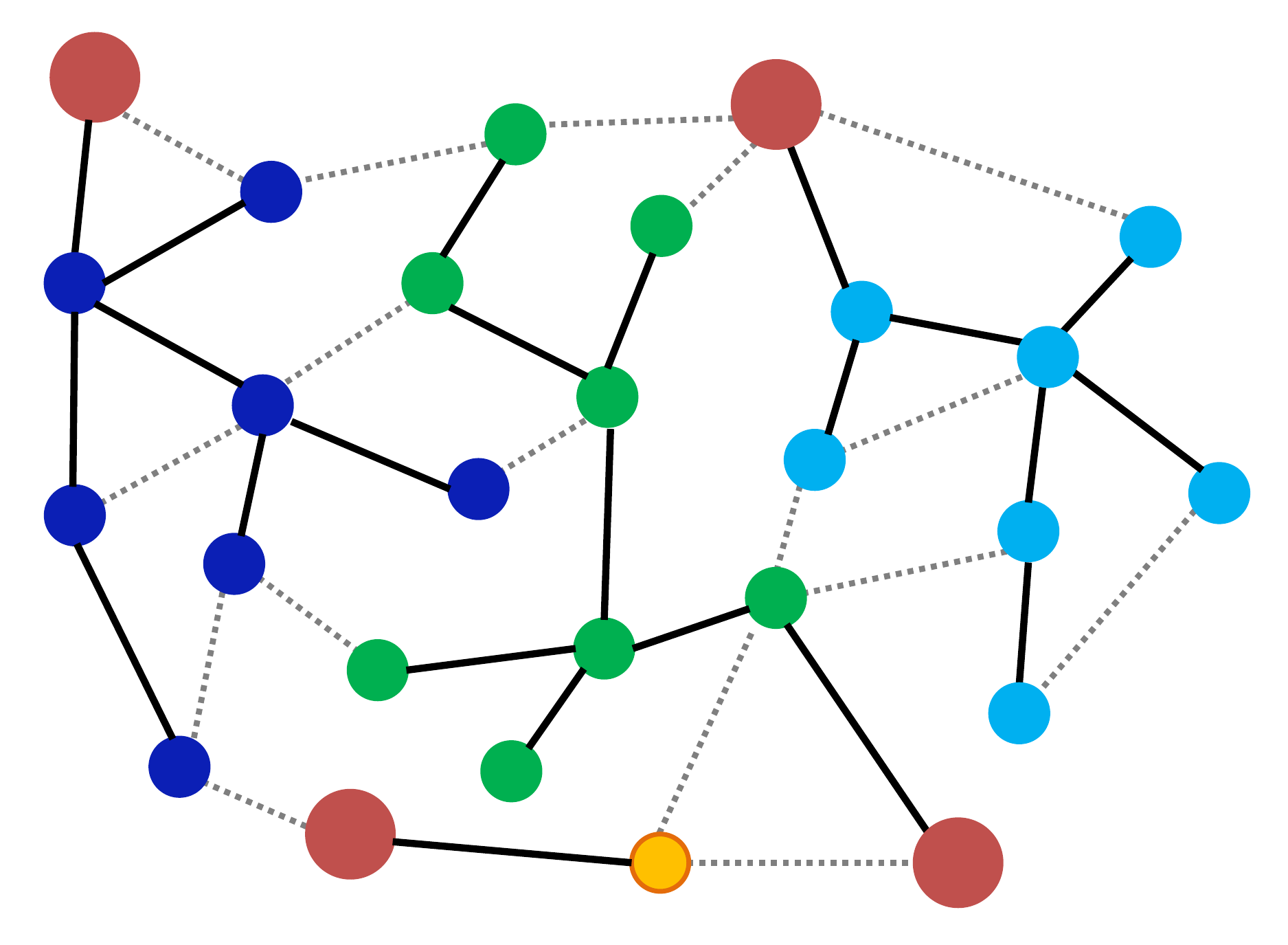}
\caption{A distribution grid with $4$ substations (large red nodes). The operational radial trees are formed by solid lines (black). Dotted grey lines represent open switches. Non-substation nodes within each tree are marked with the same color.
\label{fig:city}}
\end{figure}
In this paper, we will mainly focus on grids where the operational structure consists of only one tree $\cal T$ with nodes ${\cal V}_{\cal T}$ and operational edge set ${\cal E}_{\cal T} \subset {\cal E}$. Generalization to the case with multiple disjoint trees will be discussed along side major results.

\textbf{Power Flow (PF) Models}: Let $z_{ab}=r_{ab}+i x_{ab}$ denote the complex impedances of a line $(ab)$ ($i^2=-1$). Here $r_{ab}$ and $x_{ab}$ are  line resistance and reactance respectively. Kirchhoff's laws express the complex valued power injection at a node $a$ in tree ${\cal T}$ as
\begin{align}
P_a =p_a+i q_a
= \underset{b:(ab)\in{\cal E}_{\cal T}}{\sum}\frac{v_a^2-v_a v_b\exp(i\theta_a-i\theta_b)}{z_{ab}^*}\label{P-complex1}
\end{align}
where the real valued scalars, $v_a$, $\theta_a$, $p_a$ and $q_a$ denote the voltage magnitude, voltage phase, active and reactive power injection respectively at node $a$. $V_a (= v_a\exp(i\theta_a))$ and $P_a$ denote the nodal complex voltage and injection respectively.
One node (substation/root node in our case) is considered as reference and the voltage magnitude and phase at every non-substation node are measured relative to the reference values. As the complex power injection at the reference bus is given by negation of the sum of injections at other buses, without a loss of generality the analysis can be limited to a reduced system, where one ignores reference substation bus voltages and power injections. Under realistic assumption that losses of both active and reactive power in lines of a distribution system are small, Eq.~(\ref{P-complex1}) can be linearized as follows.

\textbf{Linear Coupled (LC) model} \cite{distgridpart1,distgridpart2}: In this model, phase difference between neighboring nodes and magnitude deviations ($v_a -1=\varepsilon_a$) from the reference voltage are assumed to be small. The PF Eqs.~(\ref{P-complex1}) are linearized jointly over both voltage magnitude and phase to give:
\begin{align}
\varepsilon = H^{-1}_{1/r}p + H^{-1}_{1/x}q~~ \theta = H^{-1}_{1/x}p - H^{-1}_{1/r}q  \label{PF_LPV_p}
\end{align}
Here, $p,q,\varepsilon$ and $\theta$ are the vectors of real power, reactive power, voltage magnitude deviation and phase angle respectively at the non-substation nodes of the reduced system. $H_{1/r}$ and $H_{1/x}$ denote the reduced weighted Laplacian matrices for $\cal T$ where  reciprocal of resistances and reactances are used respectively as edge weights. The reduced Laplacian matrices are of full rank and constructed by removing the row and column corresponding to the reference bus from the true Laplacian matrix.

\cite{distgridpart1} shows that the LC-PF model is equivalent to the LinDistFlow model \cite{89BWa,89BWb,89BWc}, if deviations in voltage magnitude are assumed to be small and thus ignored. (Notice, that if line resistances are equated to zero, the LC-PF model reduces to the DC PF model \cite{abur2004power} used for transmission grids.) We can express means $(\mu_{\theta}, \mu_{\varepsilon})$ and covariance matrices $(\Omega_{\varepsilon}, \Omega_{\theta}, \Omega_{\theta\varepsilon})$ of voltage magnitude deviations and phase angles in terms of corresponding statistics of power injections using Eq.~(\ref{PF_LPV_p}) as shown below. Other quantities can be similarly determined.
\begin{align}
\mu_{\theta} &= H^{-1}_{1/x}\mu_p - H^{-1}_{1/r}\mu_q,~~\mu_\varepsilon = H^{-1}_{1/r}\mu_p + H^{-1}_{1/x}\mu_q\label{means}\\
\Omega_{\varepsilon} &= H^{-1}_{1/r}\Omega_{p}H^{-1}_{1/r} + H^{-1}_{1/x}\Omega_qH^{-1}_{1/x}+H^{-1}_{1/r}\Omega_{pq}H^{-1}_{1/x}\nonumber\\
&~+H^{-1}_{1/x}\Omega_{qp}H^{-1}_{1/r}\label{volcovar1}
\end{align}

In the next Section, we derive key results for functions of nodal voltages in a radial distribution grid that will subsequently be used in the topology learning algorithm.

\section{Properties of Voltage Magnitudes in Radial Grids}
\label{sec:trends}

Consider grid tree $\cal T$ with operational edge set ${\cal E}_{\cal T}$. Let ${\cal P}^{a}_{\cal T}$ denote the set of edges in the unique path from node $a$ to the root node (reference bus) in tree ${\cal T}$. A node $b$ is termed as a descendant of node $a$ if ${\cal P}^{b}_{\cal T}$ includes some edge $(ac)$ connected to node $a$. We use $D^{a}_{\cal T}$ to denote the set of descendants of $a$. By definition, $a \in D^{a}_{\cal T}$. If $b$ is an immediate descendant of $a$ ($(ab) \in {\cal E}_{\cal T}$), we term $a$ as parent and $b$ as its child. These definitions are illustrated in Fig \ref{fig:picHinv}.
\begin{figure}[!bt]
\centering
\includegraphics[width=0.24\textwidth,height=.23\textwidth]{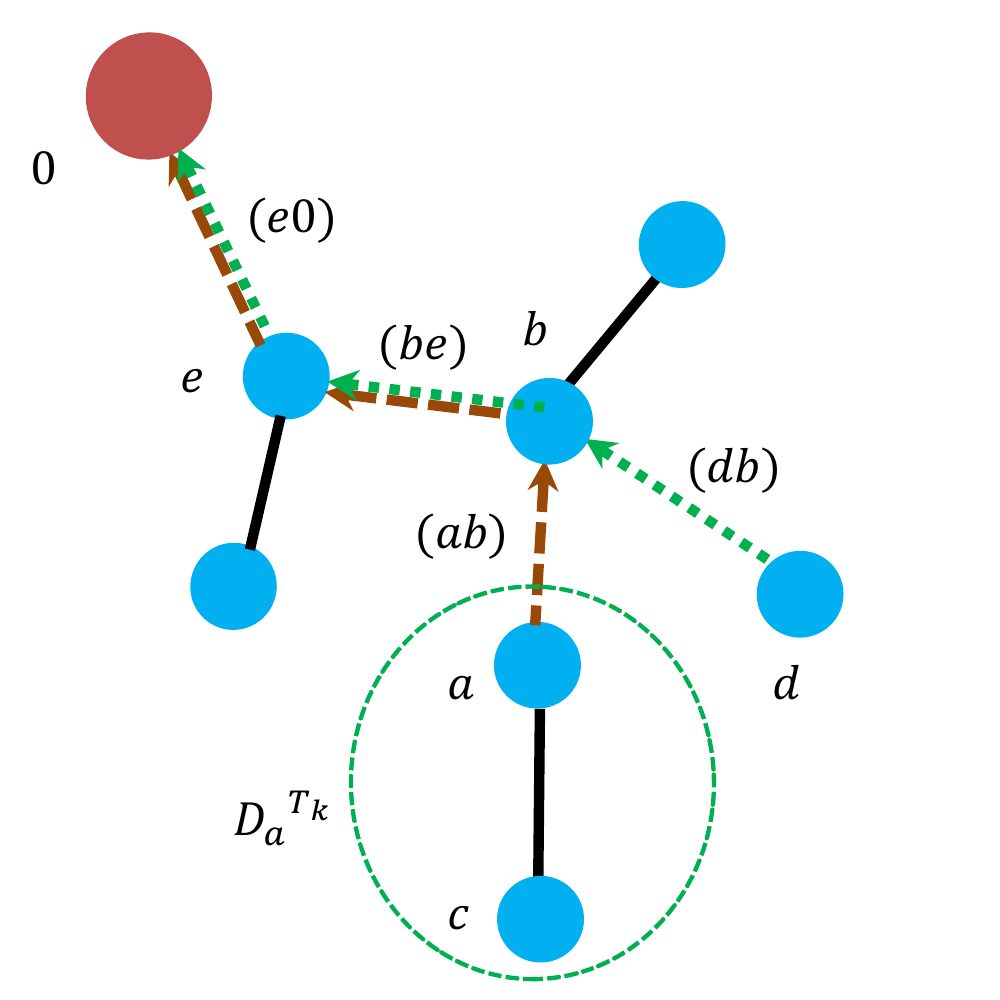}
\squeezeup
\caption{The Figure shows distribution grid tree with substation/root node colored in red. Here, nodes $a$ and $c$ are descendants of node $a$. Dotted lines represent the paths from nodes $a$ and $d$ to the root node. The paths' common edges give $H_{1/r}^{-1}(a,d) = r_{be}+ r_{e0}$.
\label{fig:picHinv}}
\end{figure}

Due to the radial topology of $\cal T$, the inverse of the reduced weighted graph Laplacian matrix $H_{1/r}$ has the following structure (see Section $4$ in \cite{distgridpart1} for details).
\begin{align}
 H_{1/r}^{-1}(a,b)&= \sum_{(cd) \in {\cal P}^a_{\cal T}\bigcap {\cal P}^b_{\cal T}} r_{cd} \label{Hrxinv}
\end{align}
Thus, the $(a,b)^{th}$ entry in $H^{-1}_{1/r}$ is given by the sum of line resistances of edges that are included in the path to the root from either node as shown in Fig.~\ref{fig:picHinv}. For nodes $a$ and its parent $b$ in tree ${\cal T}$ (see Fig.~\ref{fig:picHinv}), it follows from Eq.~(\ref{Hrxinv}) that
\begin{align}
{\huge H}_{1/r}^{-1}(a,c)-{\huge H}_{1/r}^{-1}(b,c) &&=\begin{cases}r_{ab} & \quad\text{if node $c \in D^a_{\cal T}$}\\
0 & \quad\text{otherwise,} \end{cases} \label{Hdiff}
\end{align}
We use Eqs.~(\ref{Hrxinv}) and (\ref{Hdiff}) to prove our results on voltage magnitude relations. The results hold under the following assumptions.

\textbf{Assumption $1$:} Power Injection at different nodes are not correlated, while active and reactive injections at the same node are positively correlated. Mathematically, $\forall a,b$ non-substation nodes
\begin{align}
\Omega_{qp}(a,a) > 0,~\Omega_p(a,b) = \Omega_q(a,b)= \Omega_{qp}(a,b) = 0 \nonumber
\end{align}
Note that this is a valid assumption for many distribution grids due to independence between different nodal load fluctuations and alignment/correlations between same node's active and reactive power usage.

Under Assumption $1$, we state the following result without proof. (See \cite{distgridpart2} for details.)
\begin{theorem}\label{Theorem1_LC} \cite[Theorem 1]{distgridpart2}
If node $a \neq b$ is a descendant of node $b$ on tree ${\cal T}$ then $\Omega_{\varepsilon}(a,a) > \Omega_{\varepsilon}(b,b)$.
\end{theorem}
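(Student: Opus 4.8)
\emph{Proof sketch.} The plan is to reduce to the case where $b$ is the parent of $a$, and then compare the two diagonal entries of $\Omega_\varepsilon$ term by term. Since $a$ is a descendant of $b$, node $b$ lies on the path from $a$ to the root; listing the path nodes between $a$ and $b$ as $a=c_0,c_1,\dots,c_m=b$ with $c_{i+1}$ the parent of $c_i$, the parent--child version of the statement gives $\Omega_\varepsilon(c_i,c_i)>\Omega_\varepsilon(c_{i+1},c_{i+1})$ for each $i$, and chaining these yields $\Omega_\varepsilon(a,a)>\Omega_\varepsilon(b,b)$. So it suffices to treat the case where $b$ is the parent of $a$ (note that Eq.~(\ref{Hdiff}) is itself only stated for parent and child, which is what makes this reduction the natural route).

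Assume then that $b$ is the parent of $a$. By Assumption~$1$ the matrices $\Omega_p$, $\Omega_q$ and $\Omega_{qp}=\Omega_{pq}$ are diagonal; expanding the $(n,n)$ entry of Eq.~(\ref{volcovar1}) and using the symmetry of $H^{-1}_{1/r}$ and $H^{-1}_{1/x}$ then gives, for any node $n$,
\begin{align}
\Omega_\varepsilon(n,n)=\sum_k\Big[&H^{-1}_{1/r}(n,k)^2\,\Omega_p(k,k)+H^{-1}_{1/x}(n,k)^2\,\Omega_q(k,k)\nonumber\\
&+2\,H^{-1}_{1/r}(n,k)\,H^{-1}_{1/x}(n,k)\,\Omega_{qp}(k,k)\Big].\nonumber
\end{align}

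Next I would subtract the $n=b$ expression from the $n=a$ one. By Eq.~(\ref{Hdiff}), and by the identical statement for $H_{1/x}$ (with $x_{ab}$ in place of $r_{ab}$), one has $H^{-1}_{1/r}(a,k)=H^{-1}_{1/r}(b,k)$ and $H^{-1}_{1/x}(a,k)=H^{-1}_{1/x}(b,k)$ whenever $k\notin D^a_{\cal T}$, so those terms drop out of the difference. For $k\in D^a_{\cal T}$ we have ${\cal P}^a_{\cal T}\subseteq{\cal P}^k_{\cal T}$ and ${\cal P}^b_{\cal T}\subseteq{\cal P}^k_{\cal T}$, so Eq.~(\ref{Hrxinv}) collapses $H^{-1}_{1/r}(a,k)$ to $H^{-1}_{1/r}(a,a)$ and $H^{-1}_{1/r}(b,k)$ to $H^{-1}_{1/r}(b,b)$; writing $R_a,R_b,X_a,X_b$ for these four quantities (so $R_a=R_b+r_{ab}$ by Eq.~(\ref{Hdiff}) and $X_a=X_b+x_{ab}$), the difference reduces to
\begin{align}
\Omega_\varepsilon(a,a)-\Omega_\varepsilon(b,b)=\sum_{k\in D^a_{\cal T}}\Big[&(R_a^2-R_b^2)\,\Omega_p(k,k)\nonumber\\
&+(X_a^2-X_b^2)\,\Omega_q(k,k)\nonumber\\
&+2(R_aX_a-R_bX_b)\,\Omega_{qp}(k,k)\Big].\nonumber
\end{align}

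Finally I would check that this sum is strictly positive. By Eq.~(\ref{Hrxinv}) the numbers $R_a,R_b,X_a,X_b$ are sums of positive line parameters and hence nonnegative, so with $r_{ab},x_{ab}>0$ we get $R_a^2-R_b^2=r_{ab}(R_a+R_b)\ge0$, $X_a^2-X_b^2=x_{ab}(X_a+X_b)\ge0$, and $R_aX_a-R_bX_b=R_bx_{ab}+r_{ab}X_b+r_{ab}x_{ab}\ge r_{ab}x_{ab}>0$. Since $\Omega_p(k,k),\Omega_q(k,k)\ge0$ are variances and $\Omega_{qp}(a,a)>0$ by Assumption~$1$, every summand is nonnegative and the $k=a$ one (recall $a\in D^a_{\cal T}$) is at least $2r_{ab}x_{ab}\,\Omega_{qp}(a,a)>0$, giving $\Omega_\varepsilon(a,a)>\Omega_\varepsilon(b,b)$. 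The one genuinely delicate point is the sign of the cross term $R_aX_a-R_bX_b$ multiplying the same-node correlation $\Omega_{qp}(a,a)$: it is exactly the positivity hypothesis on $\Omega_{qp}(a,a)$ in Assumption~$1$ that prevents a negative correlation between active and reactive injections from dominating and reversing the inequality. The argument goes through verbatim for an operational forest $\cup_i{\cal T}_i$, since the descendant relation is defined within a single tree and $\Omega_\varepsilon$ is block-diagonal across the trees.
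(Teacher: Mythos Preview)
Your proof is correct. The paper, however, does not give its own proof of this statement: it explicitly says the result is stated ``without proof'' and refers to \cite[Theorem~1]{distgridpart2} for the details, so there is no in-paper argument to compare against. Your route---reducing to the parent--child case and then expanding $\Omega_\varepsilon(n,n)$ via Eq.~(\ref{volcovar1}) under Assumption~1, using Eqs.~(\ref{Hrxinv}) and~(\ref{Hdiff}) to collapse the difference to a sum over $D^a_{\cal T}$, and finally checking positivity term by term---is exactly the same machinery the paper deploys in its proofs of Lemmas~\ref{Lemmacases} and~\ref{Lemmacases2}, so your argument fits seamlessly with the surrounding material. One minor remark: your bound on the $k=a$ summand via the cross term alone is enough for strictness, but in fact every summand is strictly positive since $R_a^2-R_b^2>0$, $X_a^2-X_b^2>0$, and $\Omega_{qp}(k,k)>0$ for all non-substation $k$ by Assumption~1; this does not affect the validity of your conclusion.
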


Next, we define the term $\phi_{ab} = \mathbb{E}[(\varepsilon_a - \mu_{\varepsilon_a})-(\varepsilon_b-\mu_{\varepsilon_b})]^2 $, which is the variance of the difference in voltage magnitudes between nodes $a$ and $b$.
\begin{align}
\phi_{ab} = \Omega_{\varepsilon}(a,a) - 2\Omega_{\varepsilon}(a,b) + \Omega_{\varepsilon}(b,b) \label{expand}
\end{align}
where $\Omega_{\varepsilon}$ is given by Eq.~(\ref{volcovar1}). Expressing Eq.~(\ref{expand}) in terms of the four matrices that constitute $\Omega_{\varepsilon}$ and then using Eq.~(\ref{Hrxinv}) leads to the following expansion of $\phi_{ab}$  over power injections.
\begin{align}
&\phi_{ab} = \smashoperator[lr]{\sum_{d \in {\cal T}}}(H^{-1}_{1/r}(a,d)- H^{-1}_{1/r}(b,d))^2\Omega_p(d,d)\nonumber\\
&+(H^{-1}_{1/x}(a,d)- H^{-1}_{1/x}(b,d))^2 \Omega_q(d,d)+2\left(H^{-1}_{1/r}(a,d)- H^{-1}_{1/r}(b,d)\right)\nonumber\\
&\left(H^{-1}_{1/x}(a,d)- H^{-1}_{1/x}(b,d)\right)\Omega_{pq}(d,d) \label{usediff_1}
\end{align}

The next result identifies trends in $\phi_{ab}$ along the radial grid. Note that the first two cases in Lemma \ref{Lemmacases} are proven in \cite{distgridpart2}. The additional final case is opposite of the first case and helps develop our new learning scheme presented later in this paper.
\begin{lemma} \label{Lemmacases}
For three nodes $a \neq b \neq c$ in grid tree ${\cal T}$, $\phi_{ab} < \phi_{ac}$ holds for the following cases:
\begin{enumerate}
\item Node $a$ is a descendant of node $b$ and node $b$ is a descendant of node $c$ (see Fig.~\ref{fig:item1}).
\item Nodes $a$ and $c$ are descendants of node $b$ and the path from $a$ to $c$ passes through node $b$ (see Fig.~\ref{fig:item2}).
\item Nodes $c$ is a descendant of node $b$ and node $b$ is a descendant of node $a$ (see Fig.~\ref{fig:item3}).
\end{enumerate}
\end{lemma}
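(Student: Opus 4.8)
The plan is to establish all three cases by the same device: write both $\phi_{ab}$ and $\phi_{ac}$ through the injection expansion in Eq.~(\ref{usediff_1}) and compare the two sums term by term over the index $d\in{\cal T}$, which reduces everything to comparing the coefficients $H^{-1}_{1/r}(a,d)-H^{-1}_{1/r}(b,d)$ and $H^{-1}_{1/r}(a,d)-H^{-1}_{1/r}(c,d)$ (together with their $1/x$ analogues). The first two cases are exactly \cite{distgridpart2}, so below I outline the third.

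For Case~3, order the unique path from $a$ down to $c$ as $a=u_0,u_1,\dots,u_k=b,\dots,u_m=c$, with $u_{j-1}$ the parent of $u_j$. Telescoping Eq.~(\ref{Hdiff}) along this path gives, for every node $d\in{\cal T}$ and every index $i$,
\begin{align}
H^{-1}_{1/r}(u_i,d)-H^{-1}_{1/r}(a,d)=\sum_{j=1}^{i} r_{u_{j-1}u_j}\,\mathbf{1}[d\in D^{u_j}_{\cal T}] \nonumber
\end{align}
and the same identity with $1/x$ and $x_{u_{j-1}u_j}$ replacing $1/r$ and $r_{u_{j-1}u_j}$. Because descendant sets nest down the path, $D^{u_1}_{\cal T}\supseteq D^{u_2}_{\cal T}\supseteq\cdots\supseteq D^{u_m}_{\cal T}$, this quantity is nonnegative and nondecreasing in $i$; specializing to $i=k$ and $i=m$ yields $0\le H^{-1}_{1/r}(b,d)-H^{-1}_{1/r}(a,d)\le H^{-1}_{1/r}(c,d)-H^{-1}_{1/r}(a,d)$ for every $d$, and likewise for $1/x$.

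Substituting these inequalities into Eq.~(\ref{usediff_1}) I would then compare $\phi_{ab}$ and $\phi_{ac}$ summand by summand. The $\Omega_p$- and $\Omega_q$-weighted squared terms of $\phi_{ac}$ dominate those of $\phi_{ab}$ because $t\mapsto t^2$ is monotone on $[0,\infty)$, and the cross $\Omega_{pq}$-terms are handled the same way, using that $\Omega_{pq}(d,d)\ge 0$ by Assumption~1 and that a product of two nonnegative factors can only increase when each factor increases; this gives $\phi_{ac}\ge\phi_{ab}$. For strictness, inspect the $d=c$ summand: there $H^{-1}_{1/r}(c,c)-H^{-1}_{1/r}(a,c)=\sum_{j=1}^{m} r_{u_{j-1}u_j}$ strictly exceeds $H^{-1}_{1/r}(b,c)-H^{-1}_{1/r}(a,c)=\sum_{j=1}^{k} r_{u_{j-1}u_j}$ since there is at least one positive-resistance edge between $b$ and $c$, and $\Omega_p(c,c)>0$ because $\Omega_{qp}(c,c)>0$ forces $\Omega_p(c,c)>0$ through the Cauchy--Schwarz inequality. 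Hence $\phi_{ab}<\phi_{ac}$.

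The step I expect to be the main obstacle is the bookkeeping in the second paragraph: expressing the coefficient differences $H^{-1}_{1/r}(u_i,d)-H^{-1}_{1/r}(a,d)$ cleanly in terms of which edges of the $a$-to-$c$ path lie on $d$'s root-path, and noticing that these indicator patterns are monotone along the path precisely because descendant sets are nested. Once that structural fact is available, the term-by-term comparison---including the cross term, where the sign $\Omega_{pq}(d,d)\ge0$ from Assumption~1 is exactly what keeps the inequality pointing the right way---is routine, and the same comparison, adapted to the branching configuration of the second item and the downward chain of the first, reproduces Cases~1 and~2 as well.
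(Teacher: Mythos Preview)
Your proof is correct and follows essentially the same approach as the paper: both arguments compare $\phi_{ab}$ and $\phi_{ac}$ term by term in the expansion Eq.~(\ref{usediff_1}), reducing to the coefficient inequality $0\le H^{-1}_{1/r}(b,d)-H^{-1}_{1/r}(a,d)\le H^{-1}_{1/r}(c,d)-H^{-1}_{1/r}(a,d)$ and its $1/x$ analogue. The only cosmetic difference is that the paper obtains this by partitioning $d$ into the four regions $D^c_{\cal T}$, $D^b_{\cal T}\setminus D^c_{\cal T}$, $D^a_{\cal T}\setminus D^b_{\cal T}$, ${\cal V}_{\cal T}\setminus D^a_{\cal T}$ and applying Eq.~(\ref{Hrxinv}) in each, whereas your telescoping of Eq.~(\ref{Hdiff}) along the $a$--$c$ path packages the same computation uniformly; your treatment is also more explicit about the cross term and about where strictness comes from.
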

\begin{figure}[!bt]
\centering
\hspace*{\fill}
\subfigure[]{\includegraphics[width=0.1625\textwidth]{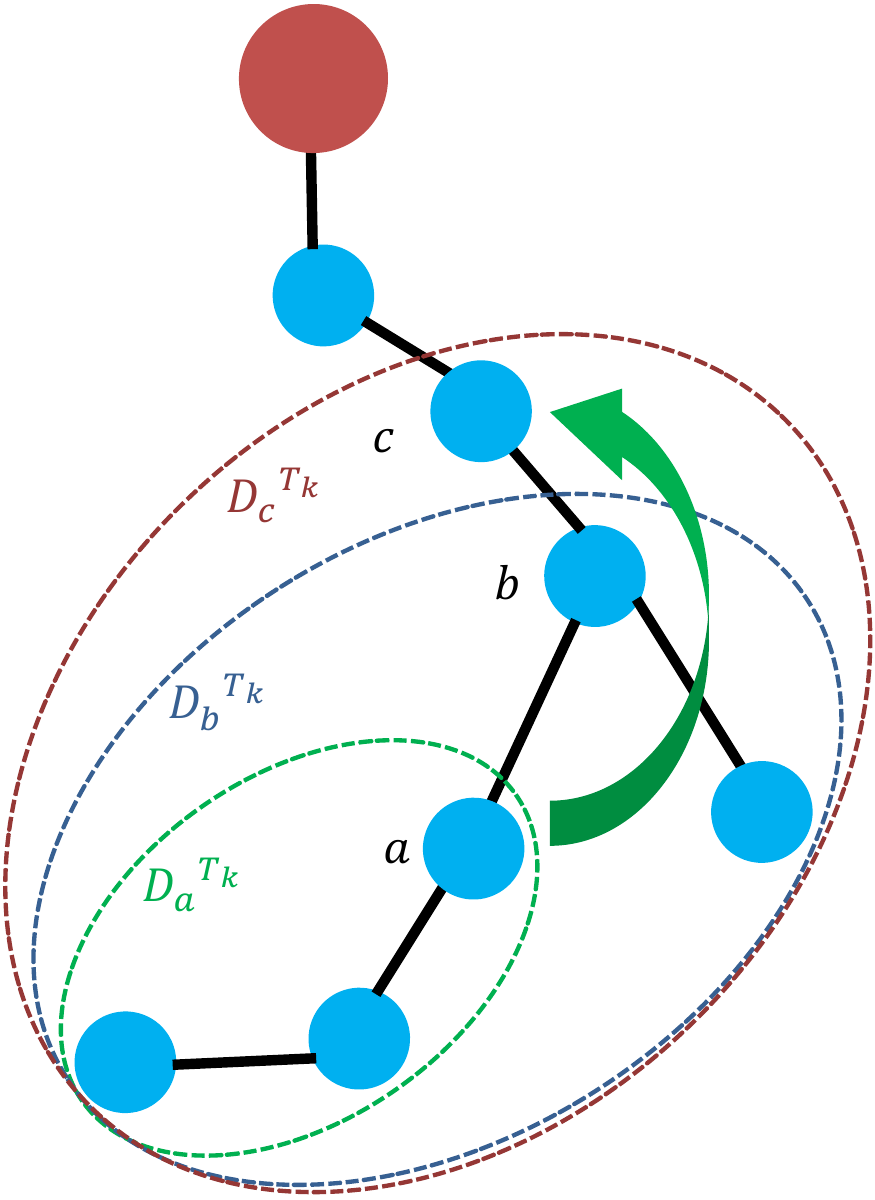}\label{fig:item1}}\hfill
\subfigure[]{\includegraphics[width=0.1559\textwidth]{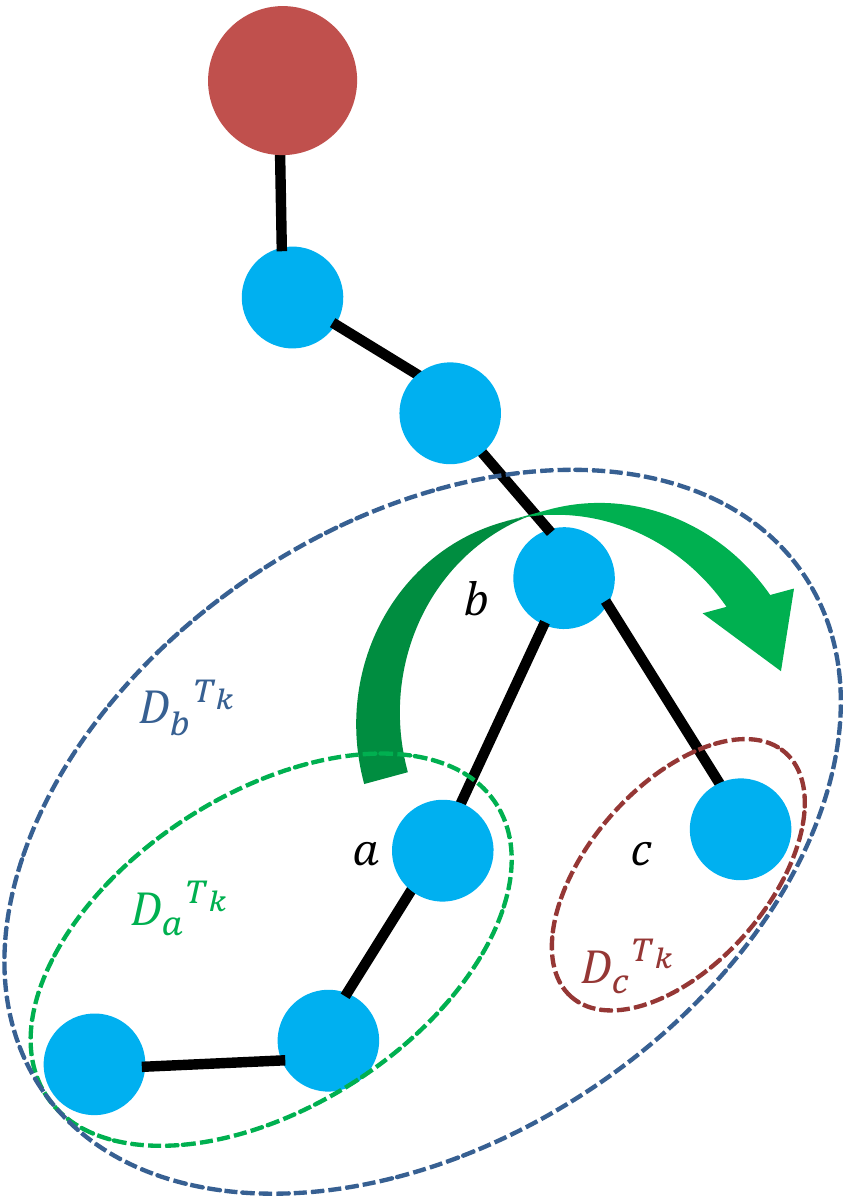}\label{fig:item2}}\hfill
\subfigure[]{\includegraphics[width=0.1625\textwidth]{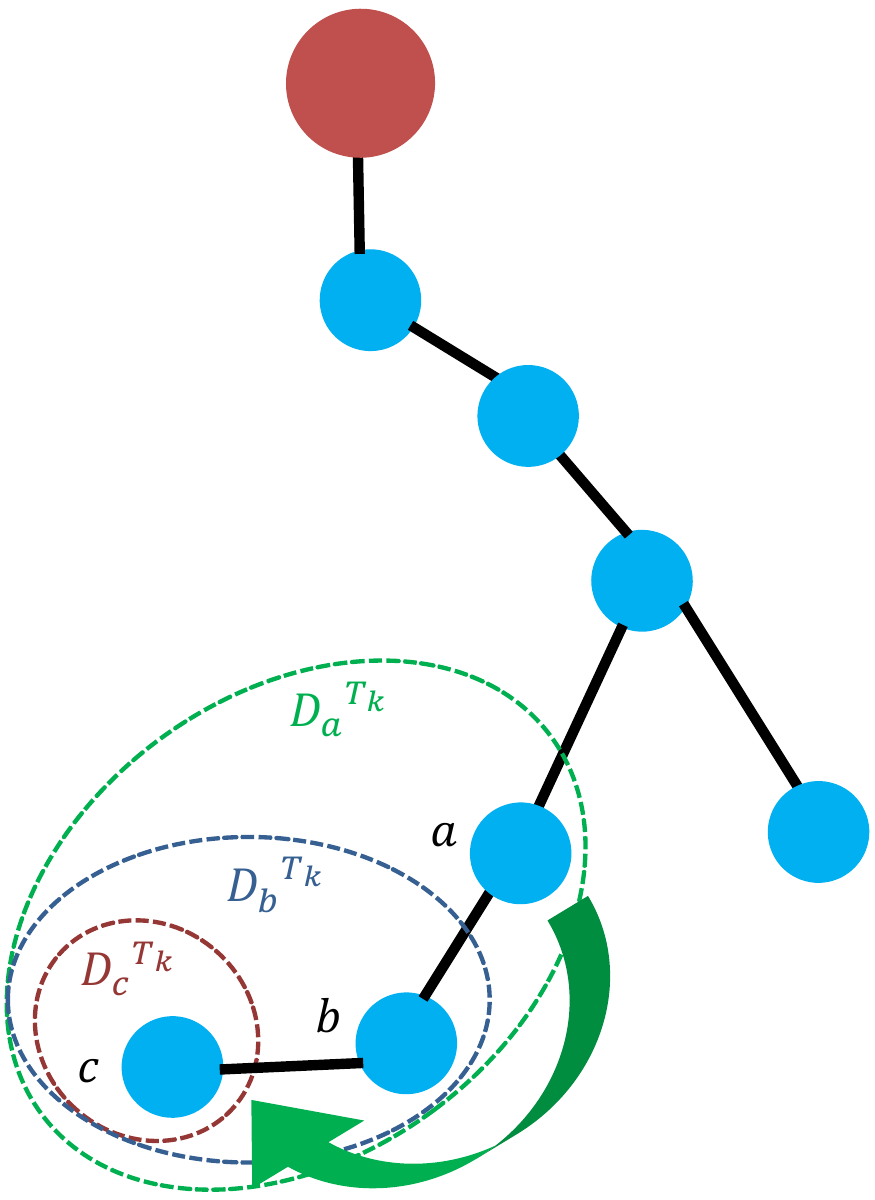}\label{fig:item3}}
\squeezeup
\hspace*{\fill}
\caption{Distribution grid tree with substation/root node represented by large red node. (a) Node $a$ is a descendant of node $b$, node $b$ is a descendant of node $c$. (b) Node $a$ and $c$ are descendants of node $b$ along disjoint sub-trees. (a) Node $c$ is a descendant of node $b$, node $b$ is a descendant of node $a$.
\label{fig:item}}
\end{figure}

\begin{proof}
We give the proof for Case $3$ depicted in Fig.~\ref{fig:item3}. In this case, ${\cal P}^b_{\cal T}- {\cal P}^a_{\cal T} \subseteq {\cal P}^c_{\cal T}-{\cal P}^a_{\cal T}$, where ${\cal P}^a_{\cal T}$ is the set of edges in the unique path from node $a$ to the root node of ${\cal T}$. Further, the sets of descendants of $a,b$ and $c$ satisfy $D^c_{\cal T} \subseteq D^b_{\cal T} \subseteq D^a_{\cal T}$. From Fig.~\ref{fig:item3}, it is clear that any node $d$ belongs to either $D_{\cal T}^c$, $D_{\cal T}^b - D_{\cal T}^c$, $D^a_{\cal T} - D^b_{\cal T}$ or ${\cal V}_{\cal T} - D^a_{\cal T}$. When $d \in D_{\cal T}^c$, using Eq.~(\ref{Hrxinv}), we have,
\begin{align}
H^{-1}_{1/r}(b,d)-H^{-1}_{1/r}(a,d) &= \smashoperator[lr]{\sum_{(ef) \in {\cal P}^b_{\cal T}-{\cal P}^a_{\cal T}}}r_{ef} < \smashoperator[r]{\sum_{(ef) \in {\cal P}^c_{\cal T}-{\cal P}^a_{\cal T}}}r_{ef}\label{f1}\\
\Rightarrow ~H^{-1}_{1/r}(b,d)-H^{-1}_{1/r}(a,d) &< H^{-1}_{1/r}(c,d)-H^{-1}_{1/r}(a,d)\label{first}
\end{align}
For node $d \in D^b_{\cal T} - D^c_{\cal T}$, one derives
\begin{align}
&H^{-1}_{1/r}(b,d)-H^{-1}_{1/r}(a,d) = \smashoperator[lr]{\sum_{(ef) \in {\cal P}^b_{\cal T}-{\cal P}^a_{\cal T}}} r_{ef} ~~~~< \smashoperator[r]{\sum_{(ef) \in {\cal P}^c_{\cal T}\cap {\cal P}^{d}_{\cal T}-{\cal P}^a_{\cal T}} } r_{ef}\label{f2}\\
\Rightarrow ~&H^{-1}_{1/r}(b,d)-H^{-1}_{1/r}(a,d) < H^{-1}_{1/r}(c,d)-H^{-1}_{1/r}(a,d)\label{second}
\end{align}
For $d \in D^a_{\cal T} - D^b_{\cal T}$, one derives
\begin{align}
&H^{-1}_{1/r}(b,d)-H^{-1}_{1/r}(a,d)= \smashoperator[lr]{\sum_{(ef) \in {\cal P}^b_{\cal T}\cap {\cal P}^{d}_{\cal T}-{\cal P}^a_{\cal T}}}r_{ef}~~= \smashoperator[r]{\sum_{(ef) \in {\cal P}^c_{\cal T}\cap {\cal P}^{d}_{\cal T}-{\cal P}^a_{\cal T}} }r_{ef}\label{f3}\\
\Rightarrow~& H^{-1}_{1/r}(b,d)-H^{-1}_{1/r}(a,d) = H^{-1}_{1/r}(c,d)-H^{-1}_{1/r}(a,d)\label{third}
\end{align}
Finally for $d \in {\cal T} - D^a_{\cal T}, H^{-1}_{1/r}(b,d)-H^{-1}_{1/r}(a,d) = H^{-1}_{1/r}(c,d)-H^{-1}_{1/r}(a,d) = 0$. Such inequalities also hold for $H_{1/x}^{-1}$ matrix. Using the inequalities in Eqs.~(\ref{first}, \ref{second},\ref{third}) for $H^{-1}_{1/r}$ and $H^{-1}_{1/x}$ with Eq.~(\ref{usediff_1}) results in $\phi_{ab} < \phi_{ac}$ for Case $3$. The proofs for the other cases ($1$ and $2$) can be done in a similar way and they are thus skipped.
\end{proof}

Further, the following results hold for operational edges in $\cal T$.
\begin{lemma} \label{Lemmacases2}
Let $(ab)$ and $(bc)$ be operational edges in $\cal T$
\begin{enumerate}
\item If node $a$ is the parent of node $b$ (see Fig.~\ref{fig:item3}) then
\begin{align}
\phi_{ab} = \smashoperator[lr]{\sum_{d \in D_{\cal T}^b}}r_{ab}^2\Omega_p(d,d)+x_{ab}^2 \Omega_q(d,d)+2r_{ab}x_{ab}\Omega_{pq}(d,d)\nonumber
\end{align}
\noindent\item If node $b$ is the parent of node $c$ and child of node $a$ (see Fig.~\ref{fig:item3}), then
\begin{align}
\phi_{ac} &= \smashoperator[lr]{\sum_{d \in D_{\cal T}^c}}(r_{ab}+r_{bc})^2\Omega_p(d,d)+(x_{ab}+x_{bc})^2 \Omega_q(d,d)\nonumber\\ &+2(r_{ab}+r_{bc})(x_{ab}+x_{bc})\Omega_{pq}(d,d) \nonumber\\ &+ \smashoperator[lr]{\sum_{d \in D_{\cal T}^b- D_{\cal T}^c}}r_{ab}^2\Omega_p(d,d)+x_{ab}^2 \Omega_q(d,d)+2r_{ab}x_{ab}\Omega_{pq}(d,d)\nonumber\\
&> \phi_{ab} + \phi_{bc} \label{equal1}
\end{align}
\noindent\item If node $b$ is the parent of both nodes $a$ and $c$ (see Fig.~\ref{fig:item2}), then
\begin{align}
\phi_{ac} &= \smashoperator[lr]{\sum_{d \in D_{\cal T}^a}}r_{ab}^2\Omega_p(d,d)\nonumber+x_{ab}^2 \Omega_q(d,d)+2r_{ab}x_{ab}\Omega_{pq}(d,d)\nonumber\\
&+ \smashoperator[lr]{\sum_{d \in D_{\cal T}^c}}r_{bc}^2\Omega_p(d,d)+x_{bc}^2 \Omega_q(d,d)+2r_{bc}x_{bc}\Omega_{pq}(d,d)\nonumber\\
&= \phi_{ab} + \phi_{bc}  \label{equal2}
\end{align}
\end{enumerate}
\end{lemma}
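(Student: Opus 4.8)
The plan is to read all three parts straight off the power-injection expansion~(\ref{usediff_1}) of $\phi$, evaluating the coefficients $H^{-1}_{1/r}(\cdot,d)-H^{-1}_{1/r}(\cdot,d)$ and $H^{-1}_{1/x}(\cdot,d)-H^{-1}_{1/x}(\cdot,d)$ region by region with the single-edge relation~(\ref{Hdiff}) (and, where a two-edge path is involved, its iterate, i.e.\ Eq.~(\ref{Hrxinv})), and then comparing the resulting finite sums.

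\textbf{Part 1.} With $a$ the parent of $b$, relation~(\ref{Hdiff}) (applied with the roles of $a$ and $b$ interchanged) gives $H^{-1}_{1/r}(a,d)-H^{-1}_{1/r}(b,d)=-r_{ab}$ for $d\in D^b_{\cal T}$ and $0$ otherwise, and likewise $H^{-1}_{1/x}(a,d)-H^{-1}_{1/x}(b,d)=-x_{ab}$ for $d\in D^b_{\cal T}$ and $0$ otherwise. Substituting into~(\ref{usediff_1}) collapses the sum over $d\in{\cal T}$ to a sum over $d\in D^b_{\cal T}$ (the minus signs are immaterial: they square away, and in the cross term $2(-r_{ab})(-x_{ab})=2r_{ab}x_{ab}$), which is exactly the stated formula for $\phi_{ab}$. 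This identity is the workhorse for the other two parts, reused after the appropriate parent/child relabeling and using the symmetry $\phi_{ab}=\phi_{ba}$.

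\textbf{Part 2.} For the chain $a\to b\to c$ I would partition the nodes into $D^c_{\cal T}$, $D^b_{\cal T}-D^c_{\cal T}$, $D^a_{\cal T}-D^b_{\cal T}$ and ${\cal V}_{\cal T}-D^a_{\cal T}$, exactly as in the proof of Lemma~\ref{Lemmacases}. Tracking which edges of ${\cal P}^d_{\cal T}$ lie in ${\cal P}^a_{\cal T}$ versus ${\cal P}^c_{\cal T}$ shows that $H^{-1}_{1/r}(c,d)-H^{-1}_{1/r}(a,d)$ equals $r_{ab}+r_{bc}$ on $D^c_{\cal T}$, equals $r_{ab}$ on $D^b_{\cal T}-D^c_{\cal T}$, and vanishes elsewhere (analogously for $H^{-1}_{1/x}$ with the $x$'s); plugging into~(\ref{usediff_1}) produces the displayed expansion of $\phi_{ac}$ in~(\ref{equal1}). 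To get the strict inequality, write $\phi_{ab}$ and $\phi_{bc}$ via the Part~1 formula, split the disjoint union $D^b_{\cal T}=D^c_{\cal T}\cup(D^b_{\cal T}-D^c_{\cal T})$ so the two sums line up with the expansion of $\phi_{ac}$, and subtract. On $D^b_{\cal T}-D^c_{\cal T}$ the contributions cancel; on $D^c_{\cal T}$ the cross identities $(r_{ab}+r_{bc})^2-r_{ab}^2-r_{bc}^2=2r_{ab}r_{bc}$, $(x_{ab}+x_{bc})^2-x_{ab}^2-x_{bc}^2=2x_{ab}x_{bc}$, and $(r_{ab}+r_{bc})(x_{ab}+x_{bc})-r_{ab}x_{ab}-r_{bc}x_{bc}=r_{ab}x_{bc}+r_{bc}x_{ab}$ leave
\begin{align}
\phi_{ac}-\phi_{ab}-\phi_{bc} &= 2\!\!\sum_{d\in D^c_{\cal T}}\!\big(r_{ab}r_{bc}\,\Omega_p(d,d)+x_{ab}x_{bc}\,\Omega_q(d,d)\nonumber\\
&\quad+(r_{ab}x_{bc}+r_{bc}x_{ab})\,\Omega_{pq}(d,d)\big),\nonumber
\end{align}
which is strictly positive: line resistances and reactances are positive, $D^c_{\cal T}\ni c$ is nonempty, $\Omega_{pq}(d,d)>0$ by Assumption~1, and $\Omega_p(d,d),\Omega_q(d,d)\ge 0$.

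\textbf{Part 3.} For siblings $a$ and $c$ with common parent $b$ I would partition the nodes into the disjoint sets $D^a_{\cal T}$, $D^c_{\cal T}$ and the remainder. By~(\ref{Hdiff}), $H^{-1}_{1/r}(a,d)-H^{-1}_{1/r}(c,d)$ equals $r_{ab}$ on $D^a_{\cal T}$, equals $-r_{bc}$ on $D^c_{\cal T}$, and vanishes elsewhere (the squares absorb the sign), and analogously for $H^{-1}_{1/x}$. Substituting into~(\ref{usediff_1}) gives exactly~(\ref{equal2}); recognizing the first sum as the Part~1 expression for $\phi_{ab}$ (with $b$ the parent of $a$) and the second as that for $\phi_{bc}$ then yields $\phi_{ac}=\phi_{ab}+\phi_{bc}$. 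The only step requiring real care is this region-by-region bookkeeping of ${\cal P}^a_{\cal T}\cap{\cal P}^d_{\cal T}$ versus ${\cal P}^c_{\cal T}\cap{\cal P}^d_{\cal T}$ (and the $b$-analogues), so that the telescoped coefficients come out as claimed; once those are pinned down, everything else is finite-sum algebra and the elementary positivity argument in Part~2, so no deeper obstacle is expected.
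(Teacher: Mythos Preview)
Your proof is correct and follows essentially the same route as the paper: Part~1 substitutes~(\ref{Hdiff}) directly into~(\ref{usediff_1}); Part~2 reuses the four-region partition from Lemma~\ref{Lemmacases} to obtain the expansion and then compares with $\phi_{ab}+\phi_{bc}$ term by term; Part~3 exploits the disjointness of $D^a_{\cal T}$ and $D^c_{\cal T}$ together with~(\ref{Hdiff}) for each sibling edge. The only cosmetic difference is that the paper packages the strict inequality in Part~2 as the elementary fact $(y_1+y_2)(y_3+y_4)>y_1y_3+y_2y_4$ for positive reals, whereas you expand the difference $\phi_{ac}-\phi_{ab}-\phi_{bc}$ explicitly and read off its positivity; these are the same computation.
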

\begin{proof}
\begin{enumerate}
\item We use Eq.~(\ref{Hdiff}) in Eq.~(\ref{usediff_1}) as $(ab)$ is an edge.
\noindent \item We follow the proof in Lemma \ref{Lemmacases}. The result holds as the left sides of Eqs.~(\ref{f1}),(\ref{f2}),(\ref{f3}) here are given by $(r_{ab}+r_{bc})$, $r_{ab}$ and $0$ respectively. The inequality in (\ref{equal1}) is derived by applying Statement $1$ for edges $(ab)$ and $(bc)$ and noting that $(y_1+y_2)*(y_3+y_4)>y_1y_3+y_2y_4$ holds for positive reals $y_1,y_2, y_3,y_4$.
\noindent\item We use the same technique as above. Here $D_{\cal T}^c$ and $D_{\cal T}^a$ are disjoint. Using this fact along with Eq.~(\ref{Hdiff}) for edges $(ab)$ and $(bc)$ results in the equality (\ref{equal2}).
\end{enumerate}
\end{proof}
It is worth mentioning that all three statements in Lemma \ref{Lemmacases2} involve line impedances corresponding to edges $(ab)$ and $(bc)$ only. In the following sections, we use these results to design our topology learning algorithm.

\section{Structure Learning with Full Observation}
\label{sec:algo1}

Our main result for topology learning using voltage magnitude measurements is formulated using Lemma \ref{Lemmacases}.
\begin{theorem}\label{main}
Let the weight of each permissible edge $(ab) \in {\cal E}$ of the original loopy graph be $\phi_{ab} = \mathbb{E}[(\varepsilon_a-\mu_{\varepsilon_a}) -(\varepsilon_b-\mu_{\varepsilon_b})]^2$. Then operational edge set ${\cal E}_{\cal T}$ in radial grid $\cal T$ forms the minimum weight spanning tree of the original graph.
\end{theorem}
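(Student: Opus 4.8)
The plan is to characterize the operational tree $\cal T$ as a minimum weight spanning tree through the classical cycle/exchange property: a spanning tree of $\cal G$ is the \emph{unique} minimum weight spanning tree precisely when, for every non-operational edge $(ab)\in{\cal E}\setminus{\cal E}_{\cal T}$, the weight $\phi_{ab}$ is \emph{strictly} larger than $\phi_{cd}$ for every edge $(cd)$ on the unique path between $a$ and $b$ in $\cal T$. The ``if'' direction is the standard swap argument: any other spanning tree ${\cal T}'$ contains an edge $f\in{\cal T}'\setminus{\cal E}_{\cal T}$, and the fundamental cycle of $f$ in $\cal T$ must contain an edge $e\in{\cal E}_{\cal T}\setminus{\cal T}'$ crossing the cut induced by deleting $f$ from ${\cal T}'$; since $\phi_e<\phi_f$, the tree ${\cal T}'-f+e$ has strictly smaller total weight, so ${\cal T}'$ is not minimal. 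Thus the whole task reduces to proving $\phi_{ab}>\phi_{cd}$ for a non-operational edge $(ab)$ and every edge $(cd)$ on its $\cal T$-path. Here I would first observe that the expansion in Eq.~(\ref{usediff_1}) of $\phi$ was obtained for an arbitrary pair of nodes, so Lemma~\ref{Lemmacases} applies to \emph{any} triple of distinct nodes of $\cal T$, not merely to edges.

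Now fix $(ab)\in{\cal E}\setminus{\cal E}_{\cal T}$ and let $m$ be the least common ancestor of $a$ and $b$ in $\cal T$, so the $a$--$b$ path decomposes into an ascending segment $a=u_0,u_1,\dots,u_j=m$ and an ascending segment $b=w_0,w_1,\dots,w_k=m$, where each $u_i$ is the parent of $u_{i-1}$ and each $w_i$ is the parent of $w_{i-1}$. Consider a path edge on the $a$-side, $(u_{i-1},u_i)$ with $1\le i\le j$ (the $b$-side being symmetric under swapping $a\leftrightarrow b$), and chain three instances of Lemma~\ref{Lemmacases}: since $u_{i-1}$ is a descendant of $u_i$ and $u_i$ is a descendant of $m$, case~1 gives $\phi_{u_{i-1}u_i}\le\phi_{u_{i-1}m}$; since $a$ is a descendant of $u_{i-1}$ and $u_{i-1}$ is a descendant of $m$, case~3 gives $\phi_{u_{i-1}m}\le\phi_{am}$; and since $a$ and $b$ are both descendants of $m$ with the $a$--$b$ path passing through $m$, case~2 gives $\phi_{am}\le\phi_{ab}$. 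Concatenating yields $\phi_{u_{i-1}u_i}\le\phi_{ab}$, and each of the three steps is \emph{strict} unless the two nodes being compared coincide, i.e. unless $u_i=m$, or $u_{i-1}=a$, or $m=b$ respectively. If all three degeneracies held simultaneously we would have $i=j=1$ and $m=b$, forcing the path to be the single edge $(ab)$, which is impossible because $(ab)\notin{\cal E}_{\cal T}$. Hence at least one step is strict and $\phi_{u_{i-1}u_i}<\phi_{ab}$.

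Running this for every edge of the $a$--$b$ path (and its mirror on the $b$-side) verifies the cycle condition for the arbitrary non-operational edge $(ab)$, so ${\cal E}_{\cal T}$ is the unique minimum weight spanning tree of $\cal G$; the same reasoning yields the minimum weight spanning \emph{forest} in the multi-tree case, since no permissible edge joins two distinct operational trees. I expect the delicate part not to be any individual estimate but the combinatorial bookkeeping: confirming that the precise descendant relations required by cases~1--3 of Lemma~\ref{Lemmacases} hold for the triples used in the chain, and tracking exactly when each inequality collapses to an equality so as to certify overall strictness (which is what makes the minimizer unique rather than merely optimal). A minor separate point is the boundary configuration $m\in\{a,b\}$, where one ascending segment is empty and only two of the three cases are invoked.
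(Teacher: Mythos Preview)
Your proposal is correct and follows essentially the same route as the paper: both arguments derive the MST property from the monotonicity of $\phi$ along tree paths established in Lemma~\ref{Lemmacases}. The paper's own proof is a two-sentence sketch (``for each node $a$, the minimum of $\phi_{ab}$ along any path in $\cal T$ is attained at its immediate neighbor, hence $\cal T$ is the minimum spanning tree''), whereas you make the implicit MST characterization explicit via the cycle property and carefully chain cases~1--3 of Lemma~\ref{Lemmacases} through the least common ancestor, tracking strictness so as to obtain uniqueness. Your version is the more rigorous of the two; in particular, the paper's phrasing only directly controls $\phi$ for edges incident to the endpoint $a$, while your chaining argument handles interior edges $(u_{i-1},u_i)$ of the fundamental cycle as well.
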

\begin{proof}
From Lemma \ref{Lemmacases}, it is clear that for each node $a$, the minimum value of $\phi_{ab}$ along any path in $\cal T$ (towards or away from the root node) is attained at its immediate neighbor $b$ on that path, connected by edge $(ab) \in {\cal E}_{\cal T}$. The minimum spanning tree for the original loopy graph with $\phi$'s as edge weights is thus given by the operational edges in the radial tree.
\end{proof}
Note that if node $a$ is taken as the substation/root node ($\varepsilon_{a}=0$), the weight of any edge $(ab)$  is given by $\phi_{ab} = \Omega_{\varepsilon}(b,b)$. As mentioned in Section \ref{sec:structure}, the substation has one child. In the spanning tree construction, the root is thus connected to the node with lowest variance of voltage magnitude. This is in agreement with Theorem \ref{Theorem1_LC}.

\textbf{Algorithm $1$:} The input consists of voltage magnitude readings for all non-substation buses in the system. An observer computes $\phi_{ab}$ for all permissible edges $(ab) \in {\cal E}$  (including those with the root node) and identifies edges in the minimum spanning tree as the set of operational edges ${\cal E}_{\cal T}$. The root node is restricted to have a single edge. Note that Algorithm $1$ does not need any information on line parameters (resistances and reactances) or on statistics of active and reactive nodal power consumption. If impedances of lines in $\cal E$ and phase angle measurements at all nodes are known, Eqs.~(\ref{PF_LPV_p}), (\ref{means}) and (\ref{volcovar1}) can be used to estimate means and covariances of each node's power injection.

\begin{algorithm}
\caption{Minimum Weight Spanning Tree based Topology Learning}
\textbf{Input:} $m$ voltage magnitude deviations $\varepsilon$ for all nodes, set of all edges $\cal T$.\\
\textbf{Output:} Operational Edge set ${\cal E}_{\cal T}$.
\begin{algorithmic}[1]
\State $\forall (ab) in {\cal E}$, compute $\phi_{ab} = \mathbb{E}[(\varepsilon_a-\mu_{\varepsilon_a}) -(\varepsilon_b-\mu_{\varepsilon_b})]^2$
\State Find minimum weight spanning tree from $\cal E$ with $\phi_{ab}$ as edge weights. Limit degree of substation to $1$.
\State ${\cal E}_{\cal T} \gets $ {edges in spanning tree}
\end{algorithmic}
\end{algorithm}
\textbf{Algorithm Complexity:} Using Kruskal's Algorithm \cite{kruskal1956shortest,Cormen2001}, the minimum spanning tree from $\cal E$ edges can be computed in $O(|{\cal E}|\log|{\cal E}|)$ operations. This is a great improvement over previous iterative or matrix inversion based techniques which scaled as $O(N^3)$, where $N = |{\cal V_{\cal T}}|$ is the number of nodes in the grid. If $\cal E$ is not known or corresponds to the complete graph, Algorithm $1$'s complexity is $O(N^2\log N)$, i.e. it still compares favorably with the prior scheme.

\textbf{Extension to Multiple Trees:} If multiple trees exist in the grid, voltage magnitudes at nodes $a$ and $b$ belonging to disjoint trees will be independent. Thus, $\phi_{ab} = \Omega_{\varepsilon}(a,a) + \Omega_{\varepsilon}(b,b)$. This result can be used to separate nodes into disjoint groups before running Algorithm $1$ to generate the operational tree in each group.

In the next Section, we extend our spanning tree based algorithm to consider cases where information is missing at some fraction of nodes.

\section{Structure Learning with Missing Data}
\label{sec:missing}

In a realistic power grid, communication packet drops or random noise events may erase voltage magnitude measurements for node set ${\cal M}$ in $\cal T$. Following \cite{distgridpart2}, we consider arbitrary placement of unobserved nodes with the following restriction.

\textbf{Assumption $2$:} Missing nodes are separated by greater than two hops in the grid tree $\cal T$.

Note that under assumption $1$, an observable node cannot be connected to two or more unobserved nodes. (We plan to analyze extensions beyond Assumption $2$ in future work.) Additionally, we assume that the adversary estimates or has access to historical information for the values of $\Omega_p, ~\Omega_q$ and $\Omega_{pq}$ covariance matrices for all nodes and impedances of all possible lines in $\cal E$.

\begin{figure}[!bt]
\centering
\hspace*{\fill}
\subfigure[]{\includegraphics[width=0.16\textwidth]{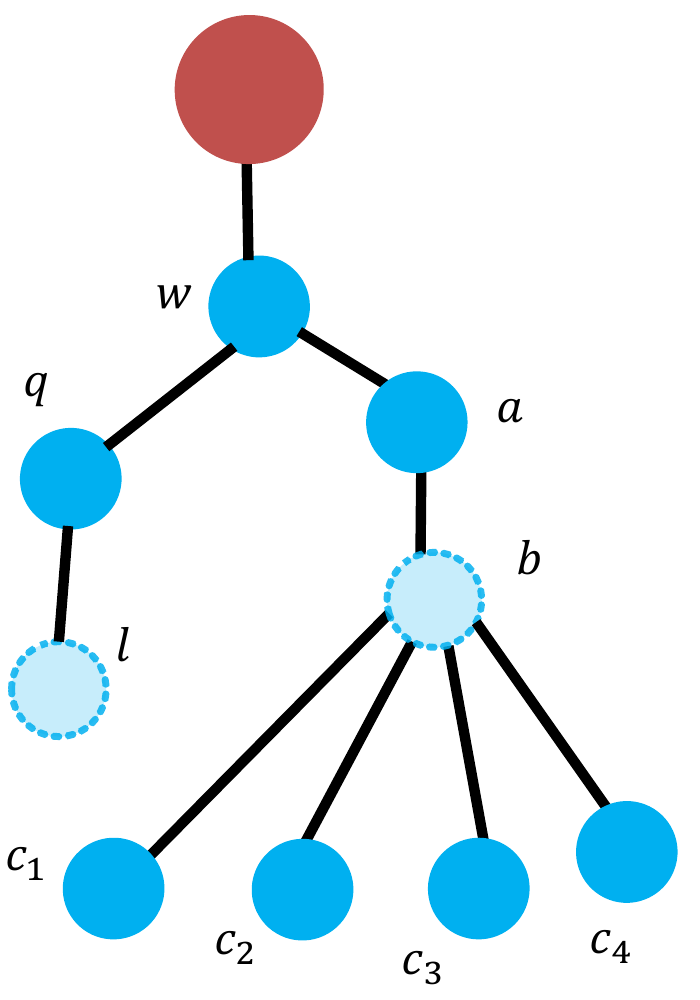}\label{fig:missing1}}\hfill
\subfigure[]{\includegraphics[width=0.16\textwidth]{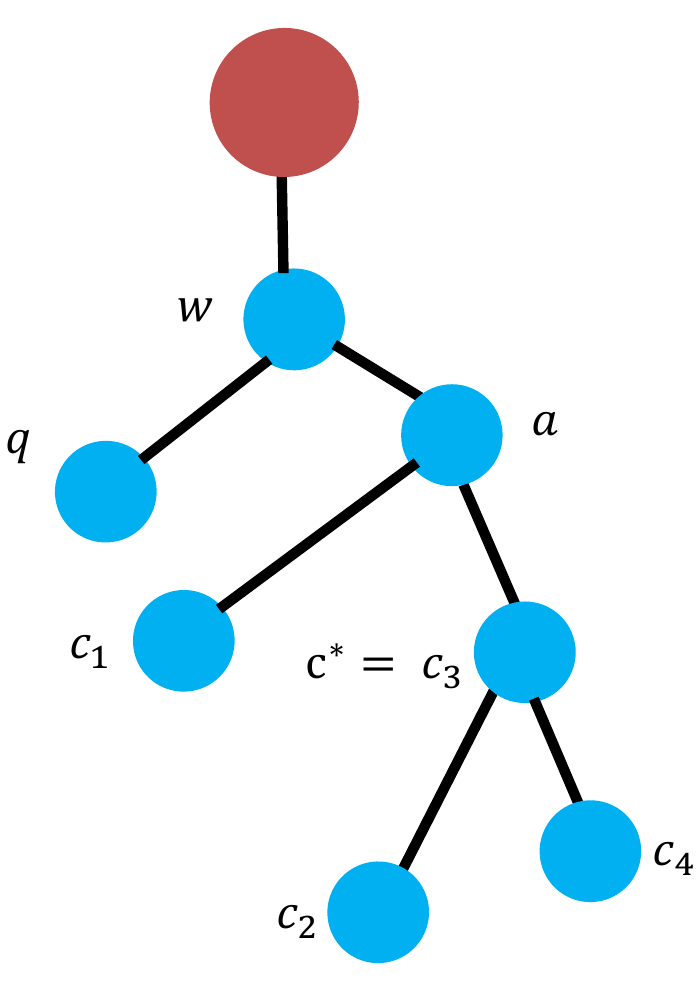}\label{fig:missing2}}\hfill
\subfigure[]{\includegraphics[width=0.16\textwidth]{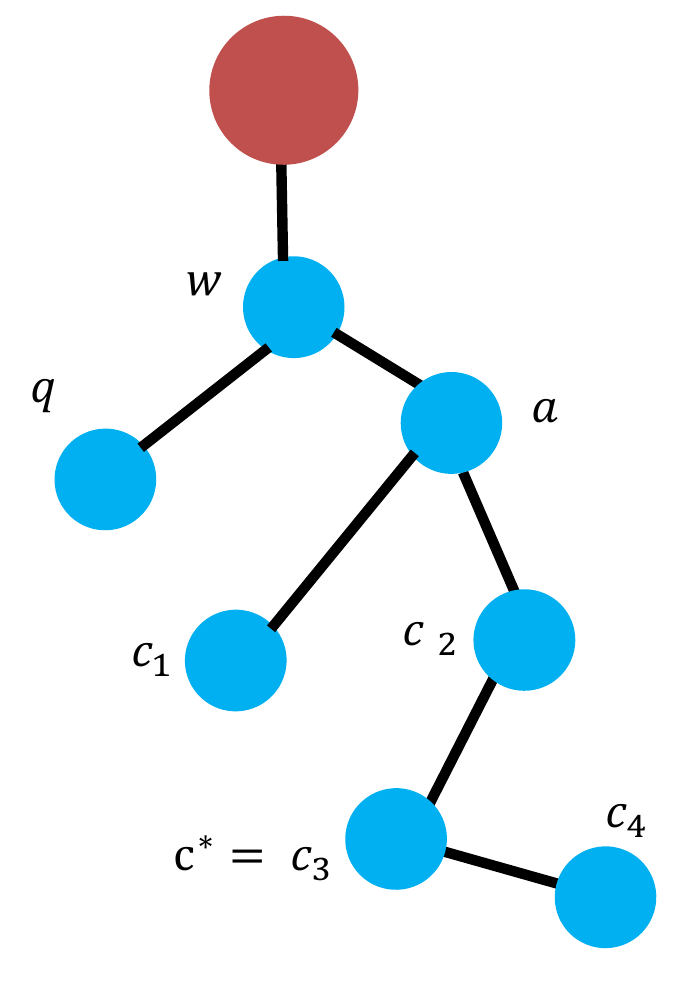}\label{fig:missing3}}
\hspace*{\fill}
\squeezeup
\caption{(a)Distribution grid tree ${\cal T}$ with unobserved leaf node $l$ non-leaf unobserved node $b$. Node $a$ is $b$'s parent while nodes $c_1,c_2,c_3,c_4$ are its children. The spanning tree ${\cal T}_{\cal M}$ of observed nodes exists in either (b) Configuration $A$  or (c) Configuration $B$ as per Theorem \ref{permissiblecases}
\label{fig:missing}}
\end{figure}

To reconstruct operational topology in the presence of missing data, we first construct the minimum weight observable spanning tree ${\cal T}_{\cal M}$ using $\phi_{ab} = \mathbb{E}[(\varepsilon_a-\mu_{\varepsilon_a}) -(\varepsilon_b-\mu_{\varepsilon_b})]^2$ as edge weights between observable nodes. We then analyze edges in tree ${\cal T}_{\cal M }$ and detect unobserved node locations. Consider the situation shown in Fig.~\ref{fig:missing1} where information from the leaf node $l$ is missing. By Assumption $2$, information from its parent ($q$) and grandparent ($w$) are observed in ${\cal T}_{\cal M}$. Note that $\phi_{qw}$ satisfies Statement $1$ in Lemma \ref{Lemmacases2}. If all other descendants of $q$ are known, statement $1$ of the Lemma can be used to identify the existence of unobserved node $l$. 

We now discuss the identification of a non-leaf node with missing information. Assume that information is missing at the node $b$ in Fig.~\ref{fig:missing1}. $b$'s parent $a$ and children node set ${\cal C} = \{c_1, c_2, c_3, c_4\}$ comprise its one-hop neighborhood, and are observable under Assumption $2$. Using Cases $1$ and $3$ in Lemma \ref{Lemmacases}, $\argmin_{d \in D^b_{\cal T} - \{b\}} \phi_{ad} \in {\cal C}$ and $\argmin_{d \in {{\cal V}_{\cal T} - D^b_{\cal T}}} \phi_{c_id}  =a \forall c_i \in {\cal C}$. Thus, descendants of $b$ are connected to the rest of ${\cal T}_{\cal M}$ through edges between its one-hop neighbors (set $C$ and $a$). The following theorem gives the edge configurations possible in ${\cal T}_{\cal M}$ for $a$ and nodes in $\cal C$.

\begin{theorem}\label{permissiblecases}
Let $\argmin_{c_i \in {\cal C}} \phi_{bc_i} = c^*$. No edge $(c_ic_j)$ between children nodes $c_i, c_j \neq c^*$ exists in ${\cal T}_{\cal M}$. All nodes in set ${\cal C}^1= \{c_i\in {\cal C}, \phi_{ac_i} < \phi_{c^*c_i}\}$ are connected to node $a$, while all nodes in ${\cal C}^2 = {\cal C}-{\cal C}^1$ are connected to $c^*$.
\end{theorem}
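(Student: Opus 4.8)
The plan is to reduce the claim to the computation of a minimum weight spanning tree of a small weighted graph on the one-hop neighbourhood $\{a\}\cup\mathcal{C}$ of $b$, and then to read off the stated attachment pattern from the cut and cycle optimality conditions for spanning trees. First I would make the reduction precise. By Theorem~\ref{main} the true tree $\mathcal{T}$ is the global minimum weight spanning tree, so the subtree $D^{c_i}_{\mathcal{T}}$ rooted at each child of $b$, as well as the part of $\mathcal{T}$ lying outside $D^b_{\mathcal{T}}$, is itself an optimal tree on its own vertex set and therefore survives intact inside $\mathcal{T}_{\mathcal{M}}$. Contracting each such piece, and using the two $\argmin$ identities stated just before the theorem (the cheapest edge leaving $D^b_{\mathcal{T}}$ toward $a$ lands on a child, and the cheapest edge leaving any child toward the outside lands on $a$) together with the monotonicity of $\phi$ along paths from Lemma~\ref{Lemmacases}, one obtains that the cross-piece edges of $\mathcal{T}_{\mathcal{M}}$ form the minimum spanning tree of the complete graph on $\{a,c_1,\dots,c_{|\mathcal{C}|}\}$ with weights $w(a,c_i)=\phi_{ac_i}$ and $w(c_i,c_j)=\phi_{c_ic_j}$. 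By Statement 3 of Lemma~\ref{Lemmacases2}, $\phi_{c_ic_j}=\phi_{bc_i}+\phi_{bc_j}$; in particular $\phi_{c^*c_i}=\phi_{bc^*}+\phi_{bc_i}$, and by the definition of $c^*$ we have $\phi_{bc^*}=\min_k\phi_{bc_k}$.

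Second, I would rule out any edge $(c_ic_j)$ with $c_i,c_j\neq c^*$. In the triangle on $\{c_i,c_j,c^*\}$ we have $\phi_{c^*c_i}=\phi_{bc^*}+\phi_{bc_i}\le\phi_{bc_j}+\phi_{bc_i}=\phi_{c_ic_j}$ and likewise $\phi_{c^*c_j}\le\phi_{c_ic_j}$, so $(c_ic_j)$ is the heaviest edge of this triangle; the cycle optimality condition then forbids it from the minimum spanning tree (treating the empirical weights as generic so that the inequalities are strict).

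Third, I would use the cut optimality condition to decide, for each child $c_i$, whether it attaches to $a$ or to $c^*$. Take the cut of $\mathcal{T}_{\mathcal{M}}$ that isolates the intact subtree rooted at $c_i$; within the contracted graph its crossing edges are $(a,c_i)$ of weight $\phi_{ac_i}$ and the edges $(c_j,c_i)$, $j\neq i$, of weight $\phi_{bc_j}+\phi_{bc_i}\ge\phi_{bc^*}+\phi_{bc_i}=\phi_{c^*c_i}$, the last minimum being attained only at $c_j=c^*$. Hence the unique lightest crossing edge is $(a,c_i)$ when $\phi_{ac_i}<\phi_{c^*c_i}$ and $(c^*,c_i)$ otherwise; since the minimum spanning tree must use that edge, $c_i$ attaches to $a$ exactly when $c_i\in\mathcal{C}^1$ and to $c^*$ exactly when $c_i\in\mathcal{C}^2$.

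The hard part is the first step: turning the informal statement that the descendants of $b$ join the rest of $\mathcal{T}_{\mathcal{M}}$ only through edges among $\{a\}\cup\mathcal{C}$ into a rigorous claim. This requires showing both that each $D^{c_i}_{\mathcal{T}}$ and the complement of $D^b_{\mathcal{T}}$ really do appear as connected pieces of $\mathcal{T}_{\mathcal{M}}$, and that no shortcut edge joining two nodes buried deep in different pieces can beat the corresponding gateway edge — which amounts to chaining the three inequalities of Lemma~\ref{Lemmacases} along paths that pass through the hidden node $b$. I would also need to verify that the edges picked out in the second and third steps actually assemble into a spanning tree of the contracted graph — in particular that $c^*$, and hence $a$, ends up connected — so that the statement is really describing the attachment of every $c_i\neq c^*$ together with the edge $(a,c^*)$; a short separate argument (or a mild nondegeneracy assumption relating $\phi_{ab}$ to the $\phi_{bc_i}$) closes this gap.
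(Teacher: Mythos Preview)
Your proposal is correct and follows essentially the same route as the paper: both arguments hinge on the additivity $\phi_{c_ic_j}=\phi_{bc_i}+\phi_{bc_j}$ from Statement~3 of Lemma~\ref{Lemmacases2} together with the minimality of $\phi_{bc^*}$ to exclude child--child edges not touching $c^*$, and then on a direct comparison of $\phi_{ac_i}$ with $\phi_{c^*c_i}$ to obtain the $\mathcal{C}^1/\mathcal{C}^2$ split. Your version is more explicit---you name the cycle and cut optimality conditions and spell out the contraction to the neighbourhood graph---whereas the paper dispatches the whole argument in three sentences, having already handled the reduction to $\{a\}\cup\mathcal{C}$ in the paragraph preceding the theorem; your closing caveat about whether $(ac^*)$ itself is present is also exactly the point the paper picks up immediately after the proof when it introduces configurations $A$ and $B$.
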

\begin{proof}
Consider any node pair $c_i,c_j\neq c^*$ in $C$. Using Eq.~(\ref{equal2}) in Lemma \ref{Lemmacases2} and definition of $c^*$, $\phi_{c_ic_j} = \phi_{bc_i}+ \phi_{bc_j} <\phi_{bc_i}+ \phi_{bc^*} = \phi_{c_ic^*}$. Thus, any possible edge between children nodes must include node $c^*$. The edges for each node in sets ${\cal C}^1$ and ${\cal C}^2$ follow immediately by comparing weights with $c^*$ and $a$.
\end{proof} 
Theorem \ref{permissiblecases} does not specify if edge $(ac^*)$ exists in ${\cal T}_{\cal M}$. In fact node $c^*$ will be connected to a node $c\dag \in {\cal C}^1$ instead of $a$ if $\phi_{ac\dag} < \phi_{c^*c\dag} <\phi_{ac^*}$ holds. There are thus two permissible configurations $A$ and $B$ (see  Figs.~\ref{fig:missing2}, \ref{fig:missing3}) in ${\cal T}_{\cal M}$ for connections between one hop neighbors of non-leaf unobservable node $b$. Note that one of sets ${\cal C}^1$ or ${\cal C}^2$ may be empty as well.

Any two nodes in $\cal C$ are children of node $b$ and thus satisfy Statement $3$ in Lemma \ref{Lemmacases2}. Observe that for both configurations $A$ and $B$, this result holds for $c^*$ and any of its children in ${\cal T}_{\cal M}$ that belong to $\cal C$. The result also holds for $c^*$ and its parent in configuration $B$. On the other hand, any node in $\cal C$ and $a$ are actually separated by node $b$ and thus it satisfies Statement $2$ in Lemma \ref{Lemmacases2}. This result thus holds for node $a$ and any of its children from $\cal C$. Statements $2$ and $3$ in Lemma \ref{Lemmacases2} can hence be used to identify unobservable node $b$ in Algorithm $2$. 

\textbf{Algorithm $2$:} Assume that information is missing at the set ${\cal M}$, thus leaving only ${\cal V}_{\cal T} - {\cal M}$ observable. Covariance matrices for power injection at all nodes of the observed set are assumed known to the observer along with impedances of all lines in $\cal E$. Algorithm $2$, first, constructs spanning tree ${\cal T}_{\cal M}$ for observed nodes using edge weights for all node combinations given by $\phi$. Observed nodes in ${\cal T}_{\cal M}$ are then arranged in reverse topological order (decreasing depth from root node). This is done as unobserved node locations are iteratively searched from leaf sites inward towards the root (see Step \ref{step1}). For each leaf $b$ with parent $a$, Steps \ref{step2} to \ref{step3} checks if edge $(ab) \in {\cal E}_{\cal T}$ with or without some unobserved leaf node $h$ connected to $b$. For undecided nodes in $C$, the Algorithm first checks for configuration $A$ or $B$ described in the preceding discussion. Step \ref{step4} determines if nodes in $C$ and $a$ are separated by a unobserved node $h$ using Statement $2$ in Lemma \ref{Lemmacases2}. If such a node doesn't exist, Step \ref{step5} search for a unobserved node that is parent of both nodes in $C$ and node $a$ using using Statement $3$ in Lemma \ref{Lemmacases2}. Nodes $a$ and set $C$ are removed from the observed tree ${\cal T}_{\cal M}$ in each iteration and discovered edges are added to ${\cal E}_{\cal T}$. Further, injection covariances at the recently identified descendants are added for use in later checks involving results from Lemma \ref{Lemmacases2}. Note that only in the final case (Step \ref{step5}), the unobserved node $h$ is not removed from set $M$ as its parent node has not been determined yet. This process is iterated by picking a new node $a$ with all children as leaf nodes until no nodes with missing information remain to be discovered.

\textbf{Complexity:} Computing the spanning tree for observed nodes has complexity $O((N-|{\cal M}|)^2\log(N-|{\cal M}|))$. Sorting observed nodes in topological order is done in linear time ($O(N-|{\cal M}|)$) \cite{Cormen2001}. Finally, checking (Steps \ref{step1}, \ref{step2}, \ref{step3}, \ref{step4}) for all iterations has complexity $O((N- |{\cal M}|)|{\cal M}|)$ as total observed nodes and edges number $O((N- |{\cal M}|))$ and searching over unobserved nodes takes at most $|{\cal M}|$ steps. The overall complexity of Algorithm $2$ is thus $O((N-|{\cal M}|)^2\log(N-|{\cal M}|)+(N- |{\cal M}|)|{\cal M}|)$ which is $O(N^2\log N)$ in the worst case. Note that this is also the worst-case complexity of Algorithm $1$.

\textbf{Relation to Learning Probabilistic Graphical Model:} It is worth noting that in the tree-structured GM learning \cite{choi2011learning}, edge $(ac^*)$ always exists due to the graph-additivity of edge weights and configuration $B$ in Fig.~\ref{fig:missing3} is not realized. The inequality in Eq.~(\ref{equal1}) of Lemma \ref{Lemmacases2} shows that $\phi$ may be strictly increasing with the number of graph hops and thus it does not satisfy graph additivity in general. Non-additivity of edge weights makes our topology learning approach a generalization of the additive model in \cite{choi2011learning} .

\textbf{Extensions:} We briefly mention two extensions of Algorithm $2$, planning to analyze these in details in the future. First, Algorithm $2$ can be used for structure learning \emph{when injection covariances at unobserved nodes are not known}. Here each unobserved node must have at least two children for unique identification. Second, Algorithm $2$ will be extended to operate \emph{when unobserved nodes are separated by $2$ hops}. In this case, permissible configurations in addition to $A$ and $B$ (see Fig.~\ref{fig:missing}) need to be checked. A modification of Statement $2$ in Lemma \ref{Lemmacases2} will be used to detect unobserved nodes. In the following Section, we discuss the performance of our designed algorithms through experiments on test networks.
\begin{algorithm*}
\caption{Minimum Weight Spanning Tree based Topology learning with Missing Data}
\textbf{Input:} Injection covariances $\Omega_p, \Omega_q, \Omega_{pq}$ of all nodes, Missing nodes Set ${\cal M}$, $m$ voltage deviation observations $\varepsilon$ for nodes in ${\cal V}_{\cal T} -{\cal M}$, set of all edges $\cal T$ with line impedances.\\
\textbf{Output:} Operational Edge set ${\cal E}_{\cal T}$.
\begin{algorithmic}[1]
\State $\forall$ observable nodes $a,b$, compute $\phi_{ab} = \mathbb{E}[(\varepsilon_a-\mu_{\varepsilon_a}) -(\varepsilon_b-\mu_{\varepsilon_b})]^2$
\State Find minimum weight spanning tree ${\cal T}_{\cal M}$ with $\phi_{ab}$ as edge weights. Limit degree of substation to $1$.
\State Sort nodes in ${\cal T}_{\cal M}$ in reserve topological order.
\While {$|{\cal M}| >0$}
\State Select node $a$ whose children set ${\cal C}$ in ${\cal T}_{\cal M}$ consists only of leaf nodes \label{step1}
\ForAll{$b \in {\cal C}$}
\If {$\phi_{ab}$ satisfy Statement $1$ in Lemma \ref{Lemmacases2} with $D^b_{\cal T}  =  \{b\}$}\label{step2}
\State ${\cal E}_{\cal T} \gets {\cal E}_{\cal T} \cup \{(ab)\}$, ${\cal C} \gets {\cal C}-\{b\}$, Add injection covariance of $b$ to $a$. Remove node $b$ from ${\cal T}_{\cal M}$.
\EndIf
\If {$\exists h \in {\cal M}$ s..t. $\phi_{ab}$ satisfy Statement $1$ in Lemma \ref{Lemmacases2} with $D^b_{\cal T}  =  \{b,h\}$}
\State  ${\cal E}_{\cal T} \gets {\cal E}_{\cal T} \cup \{(ab), (bh)\}$, ${\cal M} \gets {\cal M}-\{h\}$, ${\cal C} \gets {\cal C}-\{b\}$, Add injection covariance of $b$ and $h$ to $a$. Remove node $b$ from ${\cal T}_{\cal M}$.
\EndIf\label{step3}
\EndFor
\If {$|{\cal C}| > 0$}
\If{$\exists b \in {\cal C}, h \in {\cal M}$ s..t. $\phi_{ab}$ satisfy Statement $2$ in Lemma \ref{Lemmacases2} with $D^b_{\cal T}  = \{b\}$ and $D^h_{\cal T}  = \{h\}\cup {\cal C}$} \label{step4}
\State ${\cal E}_{\cal T} \gets {\cal E}_{\cal T} \cup \{(ah)\} \cup \{(ch) \forall c  \in {\cal C}\}$, ${\cal M} \gets {\cal M}-\{h\}$, ${\cal C} \gets \emptyset$, Add injection covariances $\forall c \in {\cal C}$ and $h$  to $a$. Remove nodes in ${\cal C}$ from ${\cal T}_{\cal M}$.
\Else
\State Pick $b \in {\cal C}$. Find $h \in {\cal M}$ s..t. $\phi_{ab}$ satisfy Statement $3$ in Lemma \ref{Lemmacases2} with $h$ as parent and $D^b_{\cal T}  = \{b\}$ , $D^a_{\cal T}  = \{a\}$. \label{step5}
\State ${\cal E}_{\cal T} \gets {\cal E}_{\cal T} \cup \{(ah)\} \cup \{(ch) \forall c  \in {\cal C}\}$, ${\cal C} \gets \emptyset$, Add injection covariances of $a$ and $\forall c \in {\cal C}$ to $h$. Remove $a$ and nodes in ${\cal C}$ from ${\cal T}_{\cal M}$.
\EndIf
\EndIf
\EndWhile
\end{algorithmic}
\end{algorithm*}

\section{Experiments}
\label{sec:experiments}

Here we demonstrate performance of Algorithm $1$ in determining the operational edge set ${\cal E}_{\cal T}$ of the radial grid ${\cal T}$. We consider a radial network \cite{testcase2,radialsource} with $29$ load nodes and one substation as shown in Fig.~\ref{fig:case}. In each of our simulation runs, we first collect complex power injection samples at the non-substation nodes from a multivariate Gaussian distribution that is uncorrelated between different nodes as per Assumption $1$. We use LC-PF model to generate nodal voltage magnitude measurements. Finally, we introduce $30$ additional edges (at random) forming the loopy edge set ${\cal E}$. The additional edges are given random impedances comparable to those of operational lines. We, first, test performance of the Algorithm $1$ for the case where locations of edges in the set $\cal E$ and voltage magnitude measurements at all non-substation nodes are available. We show results for topology learning for this case in Fig.~\ref{fig:plotadjerrors}. Note that the estimation is extremely accurate and average errors expressed relative to the size of the operational edge set) decay to zero at the sample sizes less than $50$. We also estimate covariance matrices of complex nodal power injections using the just reconstructed radial operating topology and plot results in Fig.~\ref{fig:plotcoverrors}. For covariance estimation, line impedances of the set $\cal E$ and samples of phase angle measurements are used along with voltage magnitude samples as input. The relative errors in this case decay exponentially with increase in the number of the measurement samples.

\begin{figure}[!bt]
\centering
\includegraphics[width=0.36\textwidth,height =.30\textwidth]{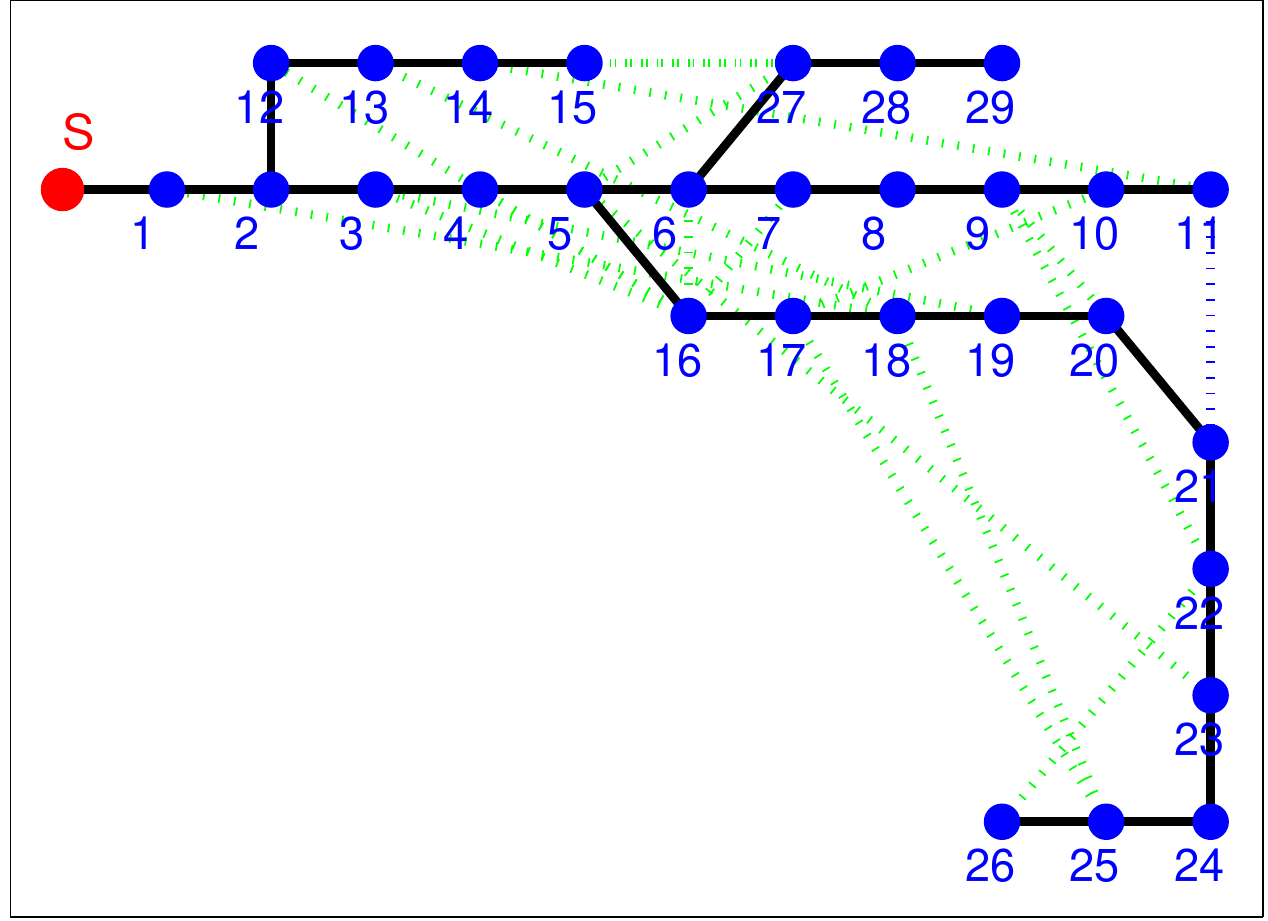}
\vspace{-.10cm}
\caption{Layouts of the grids tested. The red circle represents substation (marked as $S$). The blue circles represent numbered load nodes. Black lines represent operational edges. The additional open lines are represented by dotted green lines.}
\label{fig:case}
\vspace{-2mm}
\end{figure}

\begin{figure}[!bt]
\centering
\subfigure[]{\includegraphics[width=0.42\textwidth,height = .35\textwidth]{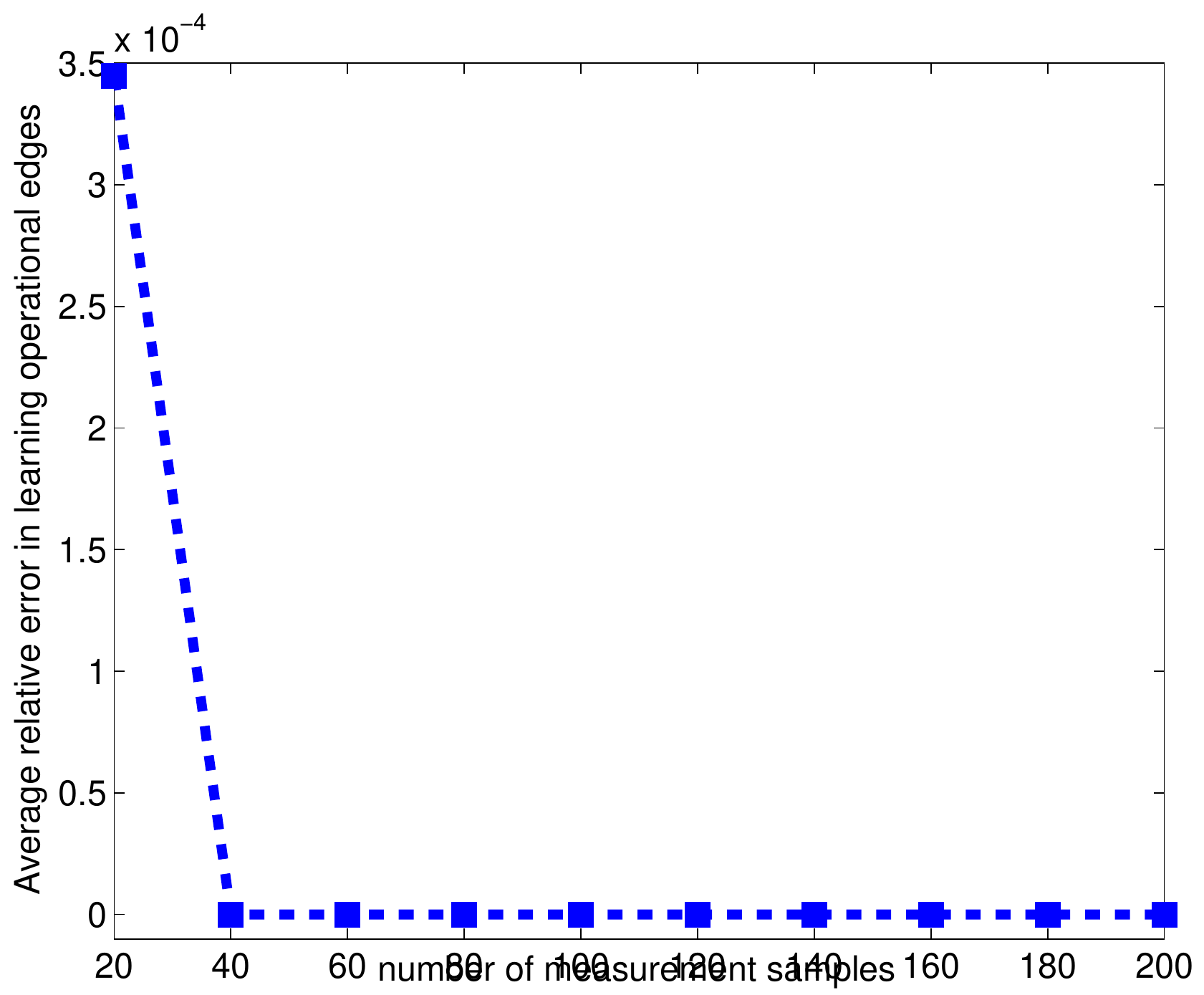}\label{fig:plotadjerrors}}
\subfigure[]{\includegraphics[width=0.42\textwidth,height = .35\textwidth]{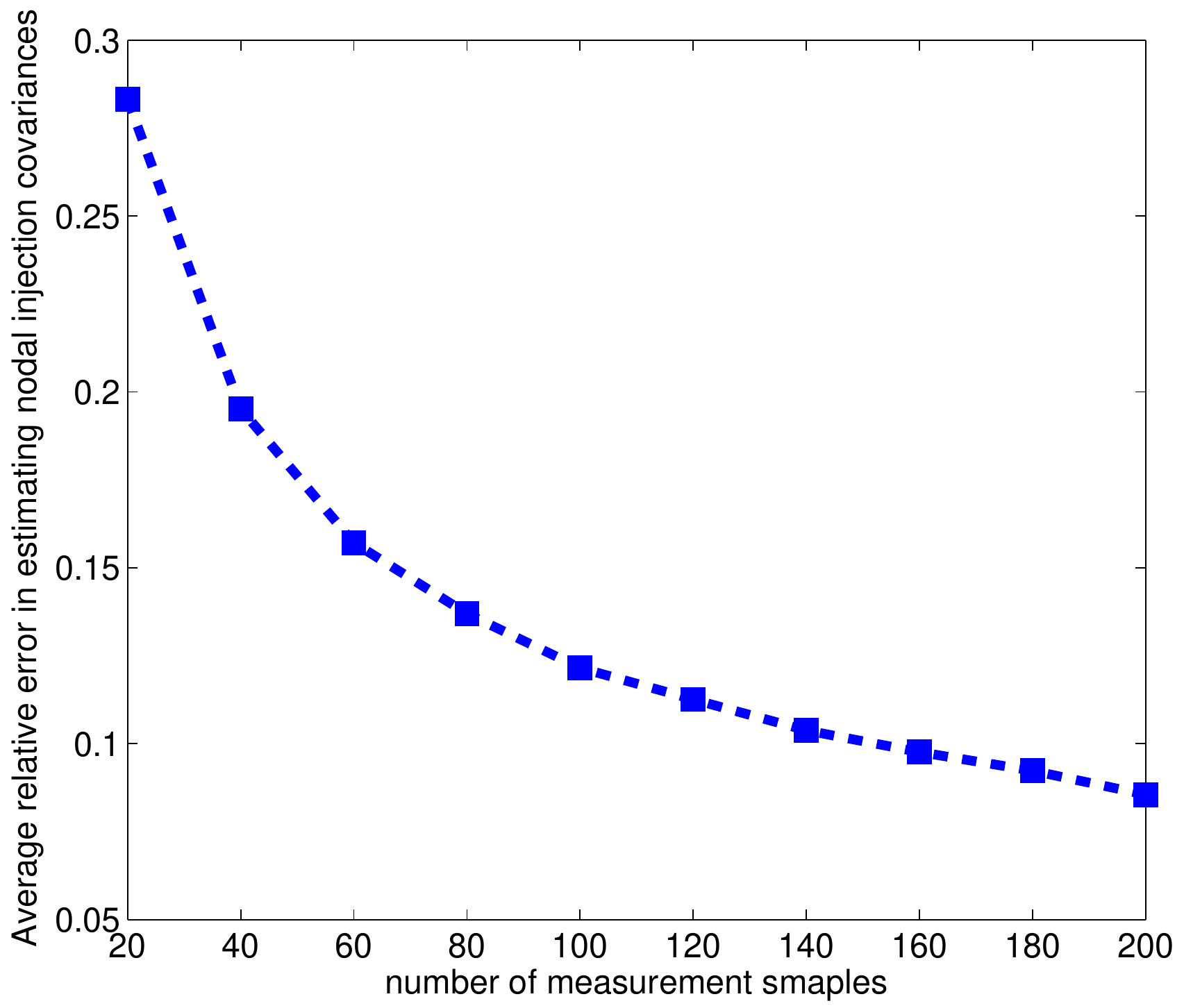}\label{fig:plotcoverrors}}
\squeezeup
\caption{Average fractional errors vs number of samples used in Algorithm $1$ for (a) Learning operational edges (c) Estimating nodal injection covariances.
\label{fig:algo1}}
\end{figure}

Next, we present simulations for Algorithm $2$ where the operational grid structure is reconstructed in the presence of unobserved nodes. We consider three cases with information at the nodes $4$, $6$ and $8$ missing. The location of the unobserved nodes are selected at random in accordance with Assumption $2$. Voltage magnitudes at the unobserved nodes are removed from the input data. Covariance of power injections at all the load nodes and impedances of all the lines within the loopy edge set $\cal E$ are provided as input to the observer. The average number of errors shown in Fig.~\ref{fig:plotmissing} decreases steadily with increase in the number of samples. This tendency is seen clearly for all the cases of the unobserved node sets. Further, the average errors increase with increase in the number of unobserved nodes for a fixed number of measurement samples. The average errors produced by Algorithm $2$ are significantly lower in comparison with the respective algorithm from \cite{distgridpart2}, however (and as expected) the Algorithm is significantly less accurately than Algorithm $1$ where all nodes are observed.
\begin{figure}[!bt]
\centering
\subfigure[]{\includegraphics[width=0.42\textwidth,height = .37\textwidth]{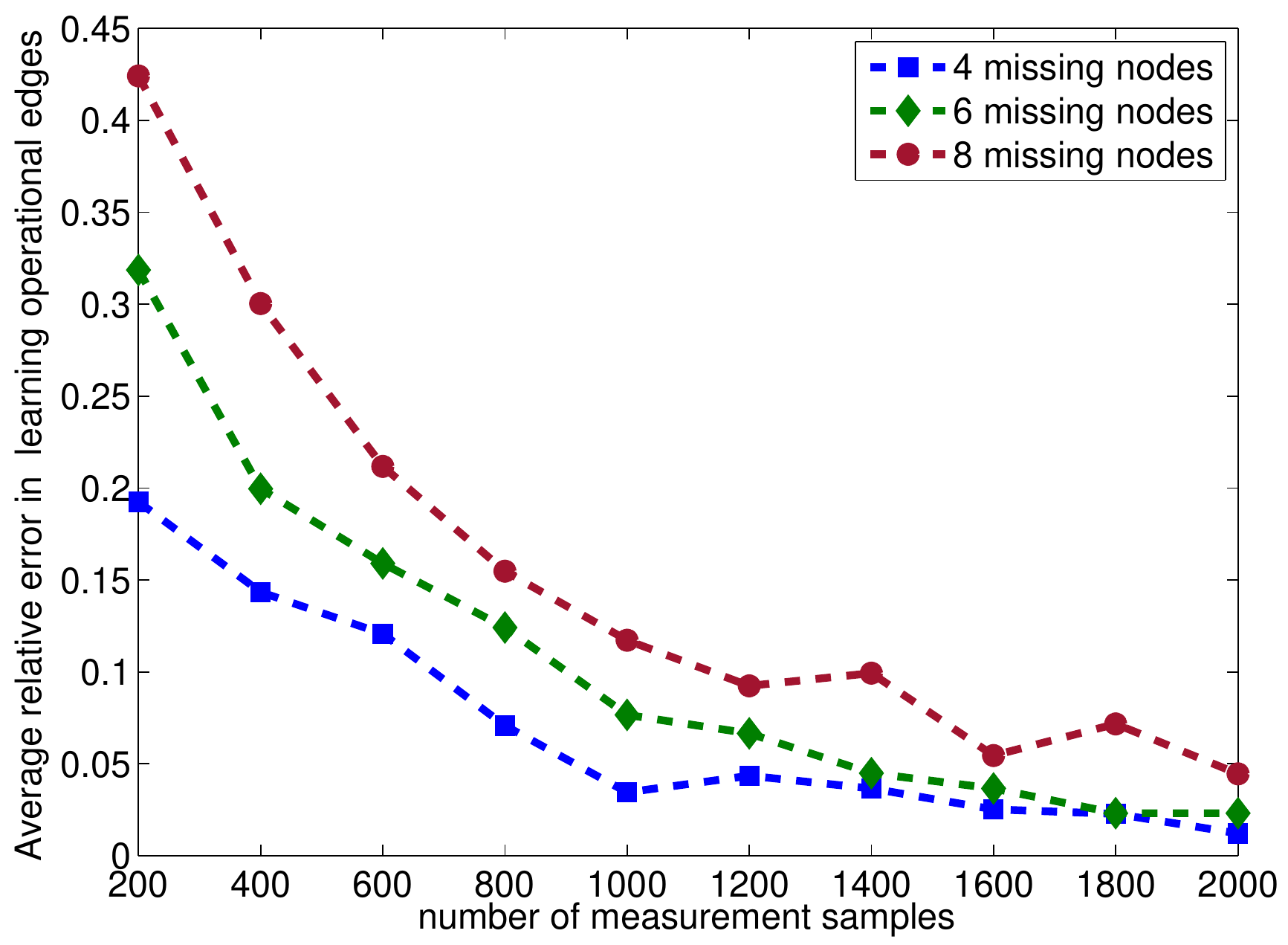}\label{fig:plotmissing}}
\squeezeup
\caption{Average fractional errors in learning operational edges vs number of samples used in Algorithm $2$ with missing data. Information is missing at the nodes $4, 6$ and $8$.
\label{fig:algo2}}
\end{figure}

\section{Conclusions}
\label{sec:conclusions}

Identifying the operational edges in the distribution grids is critical for real-time control and reliable management of different grid operations. In this paper, we study the problem of learning the radial operating structure from a dense loopy grid graph. Under an LC (linear coupled) power flow model, we show that if edge weights between load nodes are defined as the variance of the difference of their voltage magnitudes, the minimum weight spanning tree optimization over the loopy physical layout outputs operational radial structure. Using this spanning tree property, we design a fast structure learning algorithm that uses only nodal voltage magnitude measurements for the input. We then extend the spanning tree based framework to learn the operational structure when available voltage measurements are limited to a subset of the grid nodes. For unobserved nodes separated by greater than three hops, the learning algorithm is able to identify locations of the missing measurements by verifying properties of our voltage magnitude based edge weights. In this case, statistics of nodal injections and line impedances are used as a part of the input. We demonstrate good performance of the learning algorithm through experiments on distribution grid test cases. Finally, we discuss how voltage magnitude based edge weights in our algorithm generalizes edge metrics used in learning schemes of probabilistic GMs. In future we plan to generalize our approach reducing restrictions, e.g. allowing unobserved nodes to be separated by less than two hops and utilizing less information about nodal consumption.

\bibliographystyle{IEEETran}
\bibliography{../../Bib/FIDVR,../../Bib/SmartGrid,../../Bib/voltage,../../Bib/trees}
\end{document}